\newtheorem*{acknowledgements*}{Acknowledgements}
\newtheorem{theorem}{Theorem}[section]
\newtheorem{lemma}[theorem]{Lemma}
\newtheorem{corollary}[theorem]{Corollary}
\newtheorem{proposition}[theorem]{Proposition}
\newtheorem*{definition}{Definition}
\newtheorem*{question}{Question}
\newtheorem*{theorem*}{Theorem}
\newtheorem*{thm}{Theorem A}
\theoremstyle{remark}\newtheorem{remark}{Remark}
\newtheorem{case}{Case}
\numberwithin{equation}{section}
\newtheorem{example}{Example}
\renewcommand{\phi}{\varphi}
\def\O{\operatorname{O}}
\def\Var{\operatorname{Var}}
\begin{document}
	
	\title[PPC in higher dimensions]{On higher dimensional Poissonian pair correlation}
	
	\author{Tanmoy Bera, Mithun Kumar Das, Anirban Mukhopadhyay}
	\address{$^{1,3}$The Institute of Mathematical Sciences, A CI of 
		Homi Bhabha National Institute, CIT Campus, Taramani, Chennai
		600113, India.}
		\address{$^{2}$ National Institute of Science Education and Research, A CI of 
		Homi Bhabha National Institute, Jatni, Khurda, 
		752050, India.}
	\email[Tanmoy Bera$^1$]{tanmoyb@imsc.res.in}
	\email[Mithun Kumar Das$^2$]{das.mithun3@gmail.com} 
	\email[Anirban Mukhopadhyay$^3$]{anirban@imsc.res.in}

	%\author{\textcolor{red}{Advika}}
	\subjclass[2020]{11K06, 33C10, 42A16}
	\keywords{ Pair correlation, GCD sums, Additive energy, Random multiplicative function, Bessel functions }
	\maketitle
	
	\begin{abstract}
		In this article we study the pair correlation statistic for higher dimensional sequences. We show that for any $d\geq 2$, strictly 
		increasing sequences $(a_n^{(1)}),\ldots, (a_n^{(d)})$ of natural
		numbers have {\it metric Poissonian pair correlation} with respect to 
		sup-norm if their {\it joint additive energy} is $\O(N^{3-\delta})$
		for any $\delta>0$. Further, in dimension two, we establish 
		an analogous result with respect to the $2$-norm. 
		
		As a consequence, it follows that $(\{n\alpha\}, \{n^2\beta\})$ and $(\{n\alpha\}, 
		\{[n\log^An]\beta\})$ ($A \in [1,2]$) have Poissonian pair correlation 
		for almost
		all $(\alpha,\beta)\in \mathbb{R}^2$ with respect to sup-norm and $2$-norm.
		This gives a negative answer to the question raised by Hofer and  Kaltenb\"ock ~\cite{haltonnotPPC2021}.
		The proof uses estimates for {\it `Generalized' GCD-sums}.
	\end{abstract}
	
	\section{Introduction}
	Let $(x_n)\in[0,1)$ be a sequence, $s>0$ be a real number and $N$ be a natural number. The pair correlation statistic of $(x_n)$ is defined as follows:
	\[R_2(s,(x_n),N) :=\frac{1}{N}\#\Big\{1\leq n\neq m\leq N: \|x_n-x_m\|\leq \frac{s}{N}\Big\},\]
	where for any real $x$, $\displaystyle\|x\|:=\inf_{m\in\mathbb{Z}}|x+m|,$ the nearest integer distance. The sequence $(x_n)$ is said to have Poissonian pair correlation (PPC) if for all $s>0$,\\
	\[\lim_{N\to\infty}R_2(s,(x_n),N)= 2s .\]
	This concept originated from theoretical physics and it plays a crucial role in the Berry–Tabor conjecture.
	Rudnick and Sarnak~\cite{rudnick1998pair} first studied this notion from a mathematical point of view. 
	Since then this topic has received wide
attention~\cite{RSZ2001, RZ1999}, and several generalizations are known (see ~\cite{BZ,hauke2021weak,hinrichs2019multi,marklof2020pair,steinerberger2020poissonian}).
	
	The theory of uniform distribution (or equidistribution) of a sequence has a long history. It is known that Poissonian pair correlated sequences are necessarily uniformly distributed (see ~\cite{aistleitner2018pair,steinerberger2020poissonian,Grepstad2017Larcher}) but the converse is not true.
	
	Only recently, a concept of pair correlation for the higher dimensional sequences have been introduced in \cite{hinrichs2019multi} with respect to sup-norm and in \cite{steinerberger2020poissonian} with respect to $2$-norm. Throughout the article, we assume that $d\geq 2$. For $\boldsymbol{x}=(x^{(1)},\ldots,x^{(d)}) \in \mathbb{R}^d$, we denote $\|\boldsymbol{x}\|_\infty=\max{\{\|x^{(1)}\|,\ldots,\|x^{(d)}\|\}}$ and $\|\boldsymbol{x}\|_2=(\|x^{(1)}\|^2+\cdots +\|x^{(d)}\|^2)^{1/2}$.
	Let $ (\boldsymbol{x}_n)_{n\geq 1}$ be a sequence in $[0,1)^d$. For $s>0$ we write
	\[R_{2,\infty}^{(d)}(s,(\boldsymbol{x}_n),N):=\displaystyle\frac{1}{N}\#\Big\{1\leq m\neq n\leq N: \|\boldsymbol{x}_n-\boldsymbol{x}_m\|_\infty\leq \frac{s}{N^{1/d}}\Big\}\] 
	and \[R_{2,2}^{(d)}(s,(\boldsymbol{x}_n),N):=\displaystyle\frac{1}{N}\#\Big\{1\leq n\neq m\leq N: \|\boldsymbol{x}_n-\boldsymbol{x}_m\|_2\leq \frac{s}{N^{1/d}}\Big\}.\]
	
	\begin{definition}
		A sequence $(\boldsymbol{x}_n)_{n\geq 1}$ in $\mathbb{R}^d$ is said to have $\infty$-PPC if for all $s>0$, 
		\[R_{2,\infty}^{(d)}(s,(\boldsymbol{x}_n),N) \to (2s)^d \text{ as }N\to\infty,\] 
		and $2$-PPC if for all $s>0$,
		\[R_{2,2}^{(d)}(s,(\boldsymbol{x}_n),N) \to w_ds^d \text{ as }N\to\infty,\]
		where $w_d$ is the volume of the unit ball of $\mathbb{R}^d$ in $2$-norm.
	\end{definition}
	Similar to the one-dimensional case, it has been proved in \cite{hinrichs2019multi} that  $\infty$-PPC implies uniform distribution and in \cite{steinerberger2020poissonian} that $2$-PPC implies uniform distribution for higher dimensional sequences.
\begin{remark}
Instead of defining the counting function $R_2$ for balls centred at the
origin, we could have defined it for any balls and consequently defined a
stronger version of PPC by demanding convergence for every ball. In
that case, the notion of
$2$-PPC and $\infty$-PPC would coincide; furthermore, it is going to be
independent of the norm. But there is no clear way
to show this equivalence for our present definitions although the
norms are topologically equivalent.
On the contrary, we believe that such an equivalence does not hold,
though we do not have any example to demonstrate so.
\end{remark}
	
	The purpose of this article is to show $\infty$-PPC and $2$-PPC for some higher dimensional sequences of the form $(\{a_n^{(1)}\alpha_1\},\ldots,\{a_n^{(d)}\alpha_d\})$, where $(a_n^{(i)})$, for $i=1,2,\ldots,d$ are sequences of natural numbers and  $\boldsymbol{\alpha}=(\alpha_1,\ldots,\alpha_d)\in\mathbb{R}^d$. For simplicity, in this case we denote the respective pair correlation statistics by $R_{2,\infty}^{(d)}(s,\boldsymbol{\alpha},N)$ and $R_{2,2}^{(d)}(s,\boldsymbol{\alpha},N)$. 
	\begin{definition}
	We say that a $d$-dimensional sequence $(a_n^{(1)},\ldots,a_n^{(d)})$ has {\it metric Poissonian pair correlation} with respect to sup-norm ($\infty$-MPPC) if \[R_{2,\infty}^{(d)}(s,\boldsymbol{\alpha},N) \rightarrow (2s)^d \mbox{ as } N\rightarrow \infty,\] for almost all $\boldsymbol{\alpha}\in\mathbb{R}^d$. Moreover, $2$-MPPC is defined analogously.
	\end{definition}
	
Our results depend on the notion of additive energy of integer sequences. For a finite subset $A$ of integers, the additive energy $E(A)$ of $A$ is defined by
	\[E(A):=\#\{a,b,c,d\in A: a+b=c+d\}.\]
	
	Recently, Aistleitner et al.~\cite{aistleitner2017additive} proved that  for a strictly increasing sequence $(a_n)$ of natural numbers the sequence $(\{\alpha a_n\})$ has Poissonian pair correlation for almost all $\alpha,$ provided $E(A_N)\ll N^{3-\epsilon}$ for some $\epsilon>0$, where $A_N$ denotes the set of first $N$ elements of $(a_n)$. In \cite{Bloom2019GCDSA}, Bloom and Walker improved their result by relaxing the condition on the upper bound of additive energy.
	Analogous results for some special higher dimensional sequences with respect to sup-norm were established in \cite{hinrichs2019multi}, where the following theorem was specifically proved:
	\begin{thm}
		Let $(a_n)$ be a strictly increasing sequence of natural numbers, $A_N$ denote the first $N$ elements of $(a_n)$ and suppose that
		\[E(A_N)=\O\Big(\frac{N^3}{(\log N)^{1+\epsilon}}\Big), \text{ for some }\epsilon>0,\]
		then for almost all $\boldsymbol{\alpha}=(\alpha_1,\ldots,\alpha_d)\in\mathbb{R}^d$, $(\{a_n\boldsymbol{\alpha}\})=(\{a_n\alpha_1\},\ldots,\{a_n\alpha_d\})$ has $\infty$-PPC.
	\end{thm}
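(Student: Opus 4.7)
The plan is to prove the theorem by the classical second-moment method combined with a Borel--Cantelli argument along a sparse subsequence, exploiting the fact that for sup-norm the indicator of the box $\{\|\boldsymbol{x}\|_\infty\leq s/N^{1/d}\}$ \emph{factorises} coordinate-wise and that the same sequence $(a_n)$ sits in every coordinate. Write $f_N:\mathbb{T}^d\to\{0,1\}$ for the periodised indicator of the box; then $f_N(\boldsymbol{x})=\prod_{i=1}^d g_N(x^{(i)})$ where $g_N$ is the one-dimensional indicator of $\{|t|\leq s/N^{1/d}\}$. I would sandwich $f_N$ between smooth upper/lower trapezoidal approximations with Fourier coefficients satisfying $|\widehat{f_N^{\pm}}(\boldsymbol{k})|\ll\prod_{i=1}^d\min\{N^{-1/d},|k_i|^{-1}\}$ and then write
\[R_{2,\infty}^{(d)}(s,\boldsymbol{\alpha},N)=\frac{1}{N}\sum_{\substack{n,m\leq N\\ n\neq m}}f_N\!\bigl((a_n-a_m)\boldsymbol{\alpha}\bigr).\]
The zero Fourier mode gives the expected main term $(N-1)\widehat{f_N}(0)=(2s)^d+\O(1/N)$.

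The core step is to bound the variance $V_N:=\int_{[0,1]^d}\bigl(R_{2,\infty}^{(d)}(s,\boldsymbol{\alpha},N)-(2s)^d\bigr)^2\,d\boldsymbol{\alpha}$. Expanding in Fourier and using torus orthogonality $\int_{\mathbb{T}^d}e(\boldsymbol{m}\cdot\boldsymbol{\alpha})\,d\boldsymbol{\alpha}=\mathbf{1}_{\boldsymbol{m}=0}$, only quadruples $(n_1,m_1,n_2,m_2)$ and frequency pairs $(\boldsymbol{k}_1,\boldsymbol{k}_2)$ satisfying $(a_{n_1}-a_{m_1})\boldsymbol{k}_1=(a_{n_2}-a_{m_2})\boldsymbol{k}_2$ contribute. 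Setting $u=a_{n_1}-a_{m_1}$, $v=a_{n_2}-a_{m_2}$, $g=\gcd(|u|,|v|)$, and $u=gu'$, $v=gv'$, the constraint forces $\boldsymbol{k}_1=v'\boldsymbol{m}$, $\boldsymbol{k}_2=u'\boldsymbol{m}$ for some $\boldsymbol{m}\in\mathbb{Z}^d\setminus\{\boldsymbol{0}\}$; by the product form of $\widehat{f_N^\pm}$ the inner Fourier sum then decouples into $d$ identical one-dimensional GCD-type sums, each $\ll g^{2}(\log N)^{O(1)}/(N^{2/d}\max(|u|,|v|))$. After a dyadic decomposition in $u,v$, this reduces the variance to controlling
\[\frac{(\log N)^{O(d)}}{N^{3}}\sum_{\substack{n_1\neq m_1\\ n_2\neq m_2}}\frac{\gcd(a_{n_1}-a_{m_1},\,a_{n_2}-a_{m_2})^{2}}{|a_{n_1}-a_{m_1}|\cdot|a_{n_2}-a_{m_2}|},\]
and the GCD-sum estimate underlying~\cite{aistleitner2017additive} bounds this sum by $\ll E(A_N)\,(\log N)^{O(1)}$. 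Plugging in $E(A_N)\ll N^{3}/(\log N)^{1+\epsilon}$ gives, after absorbing log losses, $V_N\ll(\log N)^{-1-\epsilon/2}$.

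Finally I would choose a subsequence $N_\ell$ along which $\sum_\ell V_{N_\ell}<\infty$ (any slow growth such as $N_\ell=\lfloor\exp(\ell^{2/(1+\epsilon)})\rfloor$ works) and apply Chebyshev together with Borel--Cantelli to conclude $R_{2,\infty}^{(d)}(s,\boldsymbol{\alpha},N_\ell)\to(2s)^d$ for a.e.\ $\boldsymbol{\alpha}$ and each fixed rational $s$. Intersecting over rational $s$ and invoking monotonicity of $R_{2,\infty}^{(d)}(\cdot,\boldsymbol{\alpha},N)$ in $s$ promotes this to all real $s$. A standard sandwich argument, bracketing intermediate $N$ with $N_\ell\leq N<N_{\ell+1}$ between $R_{2,\infty}^{(d)}(s(N/N_\ell)^{1/d},\boldsymbol{\alpha},N_\ell)$ and $R_{2,\infty}^{(d)}(s(N_{\ell+1}/N)^{1/d},\boldsymbol{\alpha},N_{\ell+1})$, transfers the conclusion to all $N$ since $N_{\ell+1}/N_\ell\to 1$. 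The principal obstacle is the variance step: one must verify that the $d$-fold product structure of $\widehat{f_N}$ compounds at most a $(\log N)^{O(1)}$ loss across coordinates, so the $(\log N)^{1+\epsilon}$ savings from the energy hypothesis still leaves a summable variance. The sup-norm furnishes a literal tensor decomposition that makes this control clean, and this is precisely why the same proof template will not carry over unchanged to the $2$-norm case treated elsewhere in the paper.
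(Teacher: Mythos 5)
This statement is quoted by the paper from \cite{hinrichs2019multi} and is not reproved there; the closest in-paper argument is the proof of Theorem~\ref{infinity-PPC thm}, which follows the same second-moment/Fourier/GCD-sum architecture you use but, because it must handle $d$ \emph{distinct} sequences, ends up with the $d$-dimensional GCD sum at $\alpha=1/2$ and a loss of $\exp(C\sqrt{\log N\log\log N})$ --- which is why that theorem needs the power saving $E(A_N)\ll N^{3-\delta}$ and cannot deliver the $(\log N)^{1+\epsilon}$ version here. Your proposal correctly exploits the diagonal structure ($\boldsymbol{k}_1=v'\boldsymbol{m}$, $\boldsymbol{k}_2=u'\boldsymbol{m}$ with scalar $u,v$), which is exactly what makes the weaker hypothesis attainable. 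However, there is a genuine gap at the decisive step. You bound the inner Fourier sum by a product of $d$ one-dimensional sums each carrying a factor $(\log N)^{O(1)}$, arrive at a total loss $(\log N)^{O(d)}$, and then assert that ``after absorbing log losses'' the variance is $\ll(\log N)^{-1-\epsilon/2}$. For $d\geq 2$ a loss of $(\log N)^{d}$ cannot be absorbed by a saving of $(\log N)^{1+\epsilon}$; as written the variance bound does not follow, and you yourself flag this as ``the principal obstacle'' without resolving it. The resolution is quantitative, not cosmetic: in each coordinate the logarithm produced by the middle range $N^{1/d}(u,v)/(s\max(|u|,|v|))<|h_i|\leq N^{1/d}(u,v)/(s\min(|u|,|v|))$ is $\log\bigl(\max(|u|,|v|)/\min(|u|,|v|)\bigr)$, not $\log N$, and after writing $\max(|u|,|v|)^{-d}=|uv|^{-d/2}(\min/\max)^{d/2}$ the factor $(\min/\max)^{d/2}$ annihilates all $d$ such logarithms. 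One is then left with $\frac{s^d}{N^3}\sum_{u,v}\mathcal{R}_N(u)\mathcal{R}_N(v)\,(u,v)^{d}/|uv|^{d/2}$, which for $d\geq2$ is dominated by the $\alpha=1$ GCD sum and is $\ll E(A_N)(\log\log N)^{O(1)}$ by G\'al's theorem. Without this step your argument proves nothing beyond the power-saving regime already covered by Theorem~\ref{infinity-PPC thm}.

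Two smaller points. First, your intermediate bound ``each $\ll g^{2}(\log N)^{O(1)}/(N^{2/d}\max(|u|,|v|))$'' is dimensionally off (the correct per-coordinate bound is of the shape $s\,g\,(1+\log(\max/\min))/(N^{1/d}\max(|u|,|v|))$), though this is repaired by the computation above. Second, your subsequence $N_\ell=\lfloor\exp(\ell^{2/(1+\epsilon)})\rfloor$ has $N_{\ell+1}/N_\ell\to\infty$ whenever $\epsilon<1$ (the exponent $2/(1+\epsilon)$ exceeds $1$), which destroys the sandwich step; you need $N_\ell=\lfloor\exp(\ell^{c})\rfloor$ with $\tfrac{1}{1+\epsilon/2}<c<1$, so that $\sum_\ell V_{N_\ell}<\infty$ \emph{and} $N_{\ell+1}/N_\ell\to1$ hold simultaneously. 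This is the standard device from the proof of \cite[Theorem 5]{Bloom2019GCDSA} that the paper itself invokes, and it is essential precisely in this logarithmic-saving regime.
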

	
	We consider more general sequences, namely, $(\{a_n^{(1)}\alpha_1\},\ldots,\{a_n^{(d)}\alpha_d\})$ and study their pair correlation property. To state our results, we introduce the notion of joint
    additive energy for several increasing sequences of natural numbers. 
	\begin{definition}[Joint additive energy]
		Let $(a_n^{(1)}),(a_n^{(2)}),\ldots,(a_n^{(d)})$ be strictly increasing sequences of natural numbers and $A_N^{(i)}$ denote the first $N$ elements of $(a_n^{(i)})$, for $1\leq i\leq d.$ 
		The joint additive energy $E(A_N^{(1)};\ldots;A_N^{(d)})$ is given by
		\[E\Big(A_N^{(1)};\ldots;A_N^{(d)}\Big)=\#\Big\{1\leq n,m,k,l\leq N: a_n^{(i)}+a_m^{(i)}=a_k^{(i)}+a_l^{(i)}, i=1,\ldots,d\Big\}.
		\]
	\end{definition}
	\noindent Note that joint additive energy is additive energy in higher dimensions.
	In section~\ref{s2} we study joint additive energy in detail and obtain an upper bound of it.
	
	Now, we state our results.
	\begin{theorem}\label{infinity-PPC thm}
		Let $(a_n^{(1)}),(a_n^{(2)}),\ldots,(a_n^{(d)})$ be strictly increasing sequences of natural numbers and $A_N^{(i)}$ denote the first $N$ elements of $(a_n^{(i)})$, for $1\leq i\leq d.$ Suppose that for some $\delta>0$,
		\[E\Big(A_N^{(1)};\ldots;A_N^{(d)}\Big) = \O\left( N^{3-\delta}\right).\]
		Then $(a_n^{(1)},\ldots,a_n^{(d)})$ has $\infty$-MPPC.
	\end{theorem}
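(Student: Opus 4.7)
The plan is to adapt the variance computation combined with Borel--Cantelli framework of Aistleitner, Larcher and Lewko (and its refinement by Bloom and Walker) to $d$ dimensions, with the joint additive energy playing the role of the classical additive energy and the generalized GCD-sum estimates developed in Section~\ref{s2} controlling the off-diagonal Fourier contributions. Writing
\[
R_{2,\infty}^{(d)}(s,\boldsymbol{\alpha},N) \;=\; \frac{1}{N}\sum_{n\neq m}\prod_{i=1}^d \psi_M\bigl((a_n^{(i)}-a_m^{(i)})\alpha_i\bigr),
\]
where $M=N^{1/d}$ and $\psi_M$ denotes the indicator of $\|\cdot\|\le s/M$ on $\mathbb{R}/\mathbb{Z}$, and expanding each factor as a Fourier series $\psi_M(y) = 2s/M + \sum_{k\neq 0}\hat\psi_M(k)\,e(ky)$ with $|\hat\psi_M(k)|\le\min(2s/M,1/(\pi|k|))$, the all-zero Fourier mode produces the expected main term $(2s)^d(N-1)/N\to(2s)^d$. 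Everything then hinges on controlling the remainder $E_N(\boldsymbol{\alpha}) := R_{2,\infty}^{(d)}(s,\boldsymbol{\alpha},N)-(2s)^d$ in $L^2([0,1]^d)$.

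Squaring $E_N$ and integrating against $d\boldsymbol{\alpha}$, orthogonality in each coordinate forces, for every $i$, either a \emph{coincidence} relation $a_{n_1}^{(i)}-a_{m_1}^{(i)} = \pm(a_{n_2}^{(i)}-a_{m_2}^{(i)})$ (when one of the two factors in coordinate $i$ contributes via its constant Fourier mode) or a weighted relation $k_i(a_{n_1}^{(i)}-a_{m_1}^{(i)}) = \pm k_i'(a_{n_2}^{(i)}-a_{m_2}^{(i)})$ with $k_i, k_i' \neq 0$. Partitioning the $2^d$ possible regimes and summing, the purely-coincidence diagonal is bounded by the joint additive energy $E(A_N^{(1)};\ldots;A_N^{(d)})$, while the mixed regimes factor into products of one-dimensional GCD-type sums (over the nonzero-frequency coordinates) against partial-energy counts. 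A Cauchy--Schwarz step balancing these two ingredients, combined with the generalized GCD-sum bounds of Section~\ref{s2} and the hypothesis $E(A_N^{(1)};\ldots;A_N^{(d)})\ll N^{3-\delta}$, yields a variance estimate of the form $\int_{[0,1]^d}|E_N(\boldsymbol{\alpha})|^2\,d\boldsymbol{\alpha} \ll N^{-\eta}(\log N)^C$ for some $\eta = \eta(\delta) > 0$ and some constant $C$.

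Given such a power-saving variance bound, Chebyshev's inequality along the dyadic subsequence $N_k=2^k$ combined with Borel--Cantelli yields $E_{N_k}(\boldsymbol{\alpha})\to 0$ for almost every $\boldsymbol{\alpha}$ and every $s$ in a countable dense set $S\subset(0,\infty)$. A standard sandwich argument --- bounding, for $N_k\le N\le N_{k+1}$, the statistic $R_{2,\infty}^{(d)}(s,\boldsymbol{\alpha},N)$ above and below by $R_{2,\infty}^{(d)}(s',\boldsymbol{\alpha},N_{k\pm 1})$ with $s'$ adjusted so that the cutoff radii $s'/N_{k\pm 1}^{1/d}$ bracket $s/N^{1/d}$ --- extends the convergence to the full sequence, and continuity of $(2s)^d$ in $s$ finally extends it to all $s>0$ on a full-measure set of $\boldsymbol{\alpha}$.

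The main obstacle is the $L^2$-variance estimate: one must decompose the Fourier error into $2^d$ regimes according to which coordinates are in the zero-frequency mode and which are not, balance generalized GCD-sum bounds against the joint additive energy via Cauchy--Schwarz, and verify that the exponent $3-\delta$ in the hypothesis produces a genuine power saving after summing over all regimes. In particular, the purely-coincidence regime forces the use of the \emph{joint} additive energy rather than a mere product $\prod_i E(A_N^{(i)})$ of marginal energies, which is precisely where the notion introduced in Section~\ref{s2} becomes decisive.
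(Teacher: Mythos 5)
Your proposal follows essentially the same route as the paper: Fourier-expand the indicator of the sup-norm ball, compute the variance, split according to which coordinates carry the zero frequency (the lower-dimensional regimes saving powers of $N$), bound the all-nonzero-frequency regime by a $d$-dimensional GCD sum, and finish with Chebyshev, Borel--Cantelli and a sandwich argument. One small correction to your casework: when exactly one of the two factors in coordinate $i$ sits in the constant mode, orthogonality kills the term outright rather than producing a coincidence relation $a_{n_1}^{(i)}-a_{m_1}^{(i)}=\pm(a_{n_2}^{(i)}-a_{m_2}^{(i)})$; the joint additive energy enters instead as the weight $\sum_{\boldsymbol{v}}\mathcal{R}_N(\boldsymbol{v})^2$ in the generalized GCD-sum bound applied to the regime where every coordinate has nonzero frequency.
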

	An immediate consequence is the following.
	\begin{corollary}\label{infinity-PPC thm cor}
		Suppose that for some $\delta>0$,
		\[\displaystyle \min_{1\leq i\leq d}E(A_N^{(i)}) = \O\left( N^{3-\delta}\right).\]
		Then $(a_n^{(1)},\ldots,a_n^{(d)})$ has $\infty$-MPPC.
	\end{corollary}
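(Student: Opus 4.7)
The plan is to derive Corollary~\ref{infinity-PPC thm cor} directly from Theorem~\ref{infinity-PPC thm} by proving the elementary comparison
\[
E\bigl(A_N^{(1)};\ldots;A_N^{(d)}\bigr)\;\leq\;\min_{1\leq i\leq d}E\bigl(A_N^{(i)}\bigr).
\]
The key observation is that, since each $(a_n^{(i)})$ is strictly increasing, the map $n\mapsto a_n^{(i)}$ is injective, and therefore $E(A_N^{(i)})$ can equivalently be counted over index quadruples: $E(A_N^{(i)})=\#\{(n,m,k,l)\in[1,N]^4:a_n^{(i)}+a_m^{(i)}=a_k^{(i)}+a_l^{(i)}\}$. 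The joint energy imposes this same relation for every $i=1,\ldots,d$ simultaneously, so the set it counts is the intersection over $i$ of the sets counted by the individual energies. Intersection cannot increase cardinality, which yields the displayed inequality for every index $i$, and hence for the minimum.

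Given this inequality, if $\min_{1\leq i\leq d}E(A_N^{(i)})=\O(N^{3-\delta})$ for some $\delta>0$, then $E(A_N^{(1)};\ldots;A_N^{(d)})=\O(N^{3-\delta})$ as well, and Theorem~\ref{infinity-PPC thm} applies verbatim to conclude that $(a_n^{(1)},\ldots,a_n^{(d)})$ has $\infty$-MPPC. There is essentially no obstacle here: the corollary is a monotonicity observation dressed up as a deduction, and the only point that deserves a sentence of justification is the equivalence between the sumset formulation of $E(A_N^{(i)})$ (as a count over $4$-tuples of distinct-or-repeated \emph{elements}) and the index formulation (as a count over $4$-tuples of \emph{indices}), which rests exclusively on strict monotonicity of the sequences.
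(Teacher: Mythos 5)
Your proof is correct and matches the paper's (implicit) argument: the authors record precisely the inequality $E(A_N^{(1)};\ldots;A_N^{(d)})\leq\min_{1\leq i\leq d}E(A_N^{(i)})$ as a ``trivial estimate'' at the start of Section~\ref{s2}, and the corollary follows from Theorem~\ref{infinity-PPC thm} exactly as you describe. Your extra remark on the equivalence of the element-count and index-count formulations of $E(A_N^{(i)})$ via strict monotonicity is a reasonable point of care that the paper leaves unstated.
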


	In \cite{steinerberger2020poissonian}, Steinerberger introduced the notation of $2$-PPC but did not indicate any sequences which satisfy $2$-PPC. In the following theorems we study $2$-PPC for certain sequences.
	\begin{theorem}\label{2-PPC thm1}
		Let $(a_n)$ be a strictly increasing sequence of natural numbers and $A_N$ denote the first $N$ elements of $(a_n).$
		Assume that 
		\[\displaystyle E(A_N)=\O\Big(\frac{N^3}{(\log N)^{1+\epsilon}}\Big),\text{ for some $\epsilon>0$.}\]
		Then for almost all $\boldsymbol{\alpha}=(\alpha_1,\ldots,\alpha_d)\in\mathbb{R}^d$, the sequence $(\{a_n\boldsymbol{\alpha}\})=(\{a_n\alpha_1\},\ldots,\{a_n\alpha_d\})$ has $2$-PPC.
	\end{theorem}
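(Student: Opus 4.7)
The strategy is a second-moment/Borel--Cantelli argument in the spirit of Aistleitner--Larcher--Lewko, adapted to the Euclidean ball via Bessel-type Fourier estimates on $\mathbb{T}^d$. By periodicity I take $\boldsymbol{\alpha}\in[0,1)^d$ and set $\varepsilon = s/N^{1/d}$, so that
\[
R_{2,2}^{(d)}(s,\boldsymbol{\alpha},N) = \frac{1}{N}\sum_{\substack{n,m=1\\ n\ne m}}^N \mathbbm{1}_{B(\varepsilon)}\bigl((a_n-a_m)\boldsymbol{\alpha}\bigr),
\]
where $B(\varepsilon)\subset\mathbb{T}^d$ is the closed $2$-norm ball of radius $\varepsilon$. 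I expand $\mathbbm{1}_{B(\varepsilon)}$ in its Fourier series, whose coefficients satisfy $c_{\boldsymbol{0}}(\varepsilon)=w_d\varepsilon^d$ and, via $c_{\boldsymbol{k}}(\varepsilon) = \varepsilon^{d/2}|\boldsymbol{k}|^{-d/2}J_{d/2}(2\pi\varepsilon|\boldsymbol{k}|)$ together with $|J_{d/2}(x)| \ll \min(x^{d/2},x^{-1/2})$,
\[
|c_{\boldsymbol{k}}(\varepsilon)| \ll \min\Bigl(\varepsilon^d,\ \varepsilon^{(d-1)/2}|\boldsymbol{k}|^{-(d+1)/2}\Bigr).
\]

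The $\boldsymbol{k}=\boldsymbol{0}$ contribution to the Fourier expansion of $R_{2,2}^{(d)}$ yields the main term $(N-1)w_d\varepsilon^d \to w_ds^d$, while integrating the remaining terms against $\boldsymbol{\alpha}$ gives zero, so the mean converges to $w_ds^d$. For the variance, setting $T_N(\beta):=\sum_n e^{2\pi i a_n \beta}$,
\[
R_{2,2}^{(d)}(s,\boldsymbol{\alpha},N) - (N-1)w_d\varepsilon^d = \frac{1}{N}\sum_{\boldsymbol{k}\ne\boldsymbol{0}} c_{\boldsymbol{k}}(\varepsilon)\bigl(|T_N(\boldsymbol{k}\cdot\boldsymbol{\alpha})|^2 - N\bigr),
\]
and orthogonality on $\mathbb{T}^d$ reduces $\Var\bigl(R_{2,2}^{(d)}(s,\cdot,N)\bigr)$ to a sum over pairs $(\boldsymbol{k},\boldsymbol{k}')\in(\mathbb{Z}^d\setminus\{\boldsymbol{0}\})^2$ and quadruples $(n,m,n',m')$ satisfying the collision relation $(a_n-a_m)\boldsymbol{k} = (a_{n'}-a_{m'})\boldsymbol{k}'$. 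Decomposing $\boldsymbol{k}=g\boldsymbol{l}$ with $g=\gcd(\boldsymbol{k})$ and $\gcd(\boldsymbol{l})=1$, this relation forces $\boldsymbol{l}=\pm\boldsymbol{l}'$, so after summing over primitive directions $\boldsymbol{l}$ (where the decay $|\boldsymbol{l}|^{-(d+1)/2}$ keeps the angular sum convergent for $d\ge 2$) one is left with a GCD-type sum in $(g,g')$ weighted by Fourier coefficients and by the number of quadruples with $g(a_n-a_m)=\pm g'(a_{n'}-a_{m'})$. This last count is bounded by a generalized GCD sum in terms of $E(A_N)$.

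Feeding $E(A_N)\ll N^3/(\log N)^{1+\epsilon}$ into the previous step yields a variance bound of the shape $\Var(R_{2,2}^{(d)}(s,\cdot,N)) \ll (\log N)^{-1-\epsilon'}$ for some $\epsilon'>0$. Chebyshev plus Borel--Cantelli along the subsequence $N_j=2^j$ then give, for each $s$ in a countable dense subset of $(0,\infty)$, almost-everywhere convergence $R_{2,2}^{(d)}(s,\boldsymbol{\alpha},N_j)\to w_ds^d$, and a standard sandwich using monotonicity of the counting function in $s$ extends this to all $s>0$ and all $N$. The hard part is the bookkeeping in the GCD-sum reduction: one must exploit the $|\boldsymbol{k}|^{-(d+1)/2}$ Bessel decay (rather than the product-type decay available for the sup-norm box) to absorb the full $d$-dimensional angular contribution and still keep the variance summable under the given additive-energy hypothesis.
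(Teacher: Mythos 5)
Your outline coincides with the paper's own proof in all essentials: Fourier expansion of the Euclidean ball with Bessel coefficients satisfying $|c_{\boldsymbol{k}}|\ll\min\bigl(\varepsilon^d,\varepsilon^{(d-1)/2}|\boldsymbol{k}|^{-(d+1)/2}\bigr)$, orthogonality reducing the variance to the collision relation $(a_n-a_m)\boldsymbol{k}=(a_{n'}-a_{m'})\boldsymbol{k}'$, the observation that this forces $\boldsymbol{k},\boldsymbol{k}'$ to be collinear, and the collapse to a one-dimensional G\'al-type GCD sum weighted by the representation function $\mathcal{R}_N$, giving $\Var\ll E(A_N)N^{-3}(\log\log N)^{O(1)}\ll(\log N)^{-1-\epsilon'}$. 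The paper parametrizes the collinearity by fixing $v=a_n-a_m$, $w=a_{n'}-a_{m'}$ and writing $r_i=wh_i/(v,w)$, $t_i=vh_i/(v,w)$, then splits the $\boldsymbol{h}$-sum into three ranges of the Bessel argument; your decomposition $\boldsymbol{k}=g\boldsymbol{l}$ with $\boldsymbol{l}$ primitive is the same computation in different coordinates, and your angular sum $\sum_{\boldsymbol{l}}|\boldsymbol{l}|^{-(d+1)}$ does converge for $d\ge2$. Two small points you should still write down: the justification for interchanging summation and integration (the Fourier series of an indicator is not absolutely convergent; the paper truncates and applies Cauchy--Schwarz to the tail), and the contribution of $\boldsymbol{k}$ with some vanishing components, which in the paper is the source of the secondary error term $N^{-1/d+\gamma}$.

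The one step that fails as written is the endgame. With a variance bound of only $(\log N)^{-1-\epsilon'}$, the subsequence $N_j=2^j$ does make $\sum_j\Var\bigl(R_{2,2}^{(d)}(s,\cdot,N_j)\bigr)$ summable, but the interpolation between consecutive $N_j$ does not close: for $N_j\le N\le N_{j+1}$, monotonicity of the pair count gives $R_{2,2}^{(d)}(s,\boldsymbol{\alpha},N)\le (N_{j+1}/N_j)\,R_{2,2}^{(d)}\bigl(s(N_{j+1}/N_j)^{1/d},\boldsymbol{\alpha},N_{j+1}\bigr)$, and with ratio $2$ this only yields $\limsup\le 4w_ds^d$ and, symmetrically, $\liminf\ge w_ds^d/4$. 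You need a subsequence with $N_{j+1}/N_j\to1$ \emph{and} $\sum_j(\log N_j)^{-1-\epsilon'}<\infty$ simultaneously; no polynomially growing subsequence satisfies the second condition, so the standard choice is $N_j=\lfloor\exp(j^{\gamma})\rfloor$ with $1/(1+\epsilon')<\gamma<1$. This is exactly why the paper defers this step to the proof of Bloom--Walker's Theorem 5 rather than to the Aistleitner--Larcher--Lewko argument, which enjoys a power saving $N^{-\delta}$ in the variance and therefore tolerates $N_j=j^2$. Your sandwich in $s$ over a countable dense set is fine, but it is a separate matter from the interpolation in $N$ and does not repair this.
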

	\begin{theorem}\label{2-PPC thm2}
Let $(a_n),(b_n)$ be strictly increasing sequences of natural numbers and $A_N$, $B_N$ denote their first $N$ elements respectively. Suppose that
		\begin{align*}
			E\Big(A_N; B_N\Big)=\O\Big(N^{3}\exp{\big(-(\log{N})^{\frac{1}{2}+\delta}\big)}\Big),\,\mbox{ for some } \delta>0.
		\end{align*}
		Then $(a_n,b_n)$ has $2$-MPPC.
	\end{theorem}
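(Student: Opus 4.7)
The approach parallels the variance method behind Theorem \ref{infinity-PPC thm}, adapted to the Euclidean disk. Two features set this case apart from the $\infty$-norm analogue: the indicator of the ball does not factor as a tensor product, and its Fourier coefficients decay only as $|k|^{-3/2}$. Together these force the variance to carry genuine off-diagonal contributions governed by generalized GCD-sums rather than solely by joint additive energy, which is why the hypothesis is strengthened from a polynomial saving to a quasi-exponential one.

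The plan is first to construct smooth bounds $\chi_N^{-}\le \mathds{1}_{B_2(s/\sqrt N)}\le \chi_N^{+}$ on $\mathbb{T}^2$ by mollifying $\mathds{1}_{B_2(s/\sqrt N \pm\eta)}$ with a radial bump of width $\eta = \eta_N\to 0$. These satisfy $\widehat{\chi_N^\pm}(0) = \pi s^2/N + O(s\eta)$ (so the expected statistic tends to $\pi s^2 = w_2 s^2$) and, for $k\neq 0$,
\[
|\widehat{\chi_N^\pm}(k)| \ll \min\!\Bigl(\tfrac{s^2}{N},\ \tfrac{s^{1/2}}{N^{1/4}|k|^{3/2}},\ \eta^{-A}|k|^{-A}\Bigr).
\]
Expanding the smoothed count in a Fourier series on $\mathbb{T}^2$, isolating the $k=0$ main term, squaring and integrating the fluctuation over $\boldsymbol\alpha\in\mathbb{T}^2$ yields the identity
\[
\Var(R^\pm) = \frac{1}{N^2}\sum_{k,k'\neq 0}\widehat{\chi_N^\pm}(k)\,\overline{\widehat{\chi_N^\pm}(k')}\,T(k,k'),
\]
where $T(k,k')$ counts quadruples $(n,m,n',m')$ with $k_1(a_n-a_m)=k_1'(a_{n'}-a_{m'})$ and $k_2(b_n-b_m)=k_2'(b_{n'}-b_{m'})$.

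The diagonal $k=k'$ part is the simpler one. On the axis lines $k_1=0$ or $k_2=0$ the trivial bounds $E(A_N),E(B_N)\le N^3$ combined with the $L^2$-norm of the slice-length function $2\sqrt{R^2-y^2}$ yield a contribution $\ll s^3/\sqrt N$; in the bulk $k_1,k_2\neq 0$, Parseval together with $\|\chi_N^\pm\|_2^2\ll s^2/N$ and the joint additive energy hypothesis gives a contribution $\ll s^2\exp(-(\log N)^{1/2+\delta})$. The off-diagonal $k\neq k'$ terms are treated by parametrizing $g_i=\gcd(k_i,k_i')$, which ties the differences $a_n-a_m$ and $a_{n'}-a_{m'}$ by a common factor and analogously for the $b_n$'s; the resulting weighted sums are the generalized GCD-sums analysed in Section \ref{s2}, whose estimates absorb the off-diagonal contribution into an estimate of the same shape $\ll s^2\exp(-(\log N)^{1/2+\delta'})$ for some $0<\delta'<\delta$.

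Chebyshev's inequality together with the resulting variance bound gives convergence of $R^{\pm}(s,\boldsymbol\alpha,N)$ to $w_2 s^2$ in probability, and Borel--Cantelli along a sparse subsequence $N_j$ chosen so that $\sum_j\exp(-(\log N_j)^{1/2+\delta'})<\infty$ upgrades this to almost-sure convergence. The remaining steps --- sandwiching $\chi_N^-\le \mathds{1}_{B_2(s/\sqrt N)}\le \chi_N^+$ with $\eta_N\to 0$ calibrated so that the main-term gap vanishes, interpolating from $N_j$ to arbitrary $N$ by monotonicity in $s$, and intersecting countably many dense $s$-values --- are routine. The main obstacle is precisely the off-diagonal step: summing $|k|^{-3}$ over two-dimensional dyadic shells produces an unavoidable logarithmic divergence, which is exactly why the hypothesis must carry the quasi-exponential strength $\exp(-(\log N)^{1/2+\delta})$, so that the generalized GCD-sum machinery of Section \ref{s2} can absorb every polylogarithmic loss and close the variance estimate.
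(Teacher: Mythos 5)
Your overall skeleton (Fourier expansion of the ball indicator on $\mathbb{T}^2$, variance computation, reduction to weighted GCD sums, Chebyshev plus Borel--Cantelli) matches the paper's strategy, and your diagonal analysis via Parseval and the joint additive energy hypothesis is sound. The mollification $\chi_N^{\pm}$ is an admissible variant: the paper works instead with the sharp indicator, whose Fourier coefficients are written exactly as $c_{\boldsymbol r}=\frac{s}{\sqrt{N}\,\|\boldsymbol r\|_2}J_{1}\bigl(2\pi s\|\boldsymbol r\|_2/\sqrt{N}\bigr)$ and estimated via Lemma \ref{bessel fn lemma}, with the rearrangement justified by truncating the series; nothing essential hinges on this choice.

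The genuine gap is in the off-diagonal step, which is where the entire difficulty of the theorem sits. After parametrizing the solutions of $r_iv_i=t_iw_i$ and summing the Bessel coefficients over the resulting one-parameter families, the variance reduces to
\[
\frac{s^2\log N}{N^3}\sum_{v_1,v_2,w_1,w_2\neq 0}\mathcal{R}_N(v_1,v_2)\,\mathcal{R}_N(w_1,w_2)\,\frac{(v_1,w_1)(v_2,w_2)}{\sqrt{|v_1w_1v_2w_2|}},
\]
a two-dimensional GCD sum at the critical exponent $\alpha=1/2$. The only available bound at this exponent (Proposition \ref{gen gcd lemma}, the $\alpha\to 1/2$ case) loses a factor $\exp\bigl(C\sqrt{\log N\log\log N}\bigr)$ against $\|f\|_2^2\le E(A_N^{(1)};A_N^{(2)})$, and it is precisely this quasi-exponential loss --- not any ``unavoidable logarithmic divergence from summing $|k|^{-3}$ over dyadic shells'' (that sum in fact converges; the only genuine $\log N$ comes from a one-dimensional harmonic sum over the parameter $h_1$) --- that forces the hypothesis $E\ll N^3\exp(-(\log N)^{1/2+\delta})$. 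Your explanation is inconsistent on its face: a polylogarithmic loss would be absorbed by a hypothesis of the shape $N^3(\log N)^{-1-\epsilon}$, exactly as in Theorem \ref{2-PPC thm1}, where identical components produce a GCD sum at exponent $\alpha=d/2\ge 1$ and G\'al's theorem costs only $(\log\log N)^{O(1)}$. Since you never invoke the critical-exponent estimate and attribute the loss to the wrong source, your claimed bound $\ll s^2\exp(-(\log N)^{1/2+\delta'})$ for the off-diagonal contribution is unjustified as written; supplying this estimate is the missing heart of the proof. (Also, the GCD sums are treated in the section on ``Generalized'' GCD sums, not in Section \ref{s2}, which concerns joint additive energy.)
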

	
	\begin{remark}Due to technical complexity, we prove Theorem~\ref{2-PPC thm2} only for dimension two,
though the arguments can be extended to higher dimensions.
	\end{remark}
Recently, Hofer and  Kaltenb\"ock ~\cite{haltonnotPPC2021} asked the following question:
\begin{question}
For Poissonian pair correlations in the higher dimensional setting, is it necessary for all its component sequences to have the Poissonian pair correlation?
\end{question}
 The following examples give a negative answer to this question.
	\begin{remark}
		For an arbitrary strictly increasing sequence $(a_n)$ of natural numbers and for any $l\geq 2\in\mathbb{N},$
		$(a_n,n^l)$ has $\infty$-MPPC and $2$-MPPC. 
		In particular, $(\{n\alpha\},\{n^2\beta\})$ has $\infty$-PPC and $2$-PPC for almost all $(\alpha,\,\beta)\in\mathbb{R}^2.$  It is known that $(\{n\alpha\})_{n\in \mathbb{N}}$ does not have PPC for any $\alpha\in \mathbb{R}$(for instance see ~\cite{LS}).
		Thus, the $\infty$-PPC and $2$-PPC in higher dimensions do not imply that each component has PPC.
	\end{remark}
	Next, we obtain another such example in dimension two. 
	\begin{theorem}\label{thm6}
		Let $A \in [1, 2]$ be any real number. Then the sequence $(n, [n(\log n)^A])$ has $\infty$-MPPC and $2$-MPPC.
	\end{theorem}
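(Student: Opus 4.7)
My plan is to apply Theorem~\ref{infinity-PPC thm} and Theorem~\ref{2-PPC thm2} with $d=2$, $a_n^{(1)} = n$ and $a_n^{(2)} = [n(\log n)^A]$. Both conclusions will follow at once from the single joint-energy bound
\[
E\bigl(A_N^{(1)};A_N^{(2)}\bigr) \;\ll\; N^2 \log N,
\]
which is comfortably inside the hypotheses $\O(N^{3-\delta})$ and $\O\bigl(N^3 \exp(-(\log N)^{1/2+\delta})\bigr)$ required by the two theorems.

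The rigidity of $a_n^{(1)}=n$ forces $n+m=k+l$, so I parametrize $k=n+h$, $l=m-h$ for $h\in\mathbb{Z}$. Writing $b_n := [n(\log n)^A]$ and $f(x) := x(\log x)^A$ (so that $b_n = f(n) + \O(1)$), and setting $u := m-h$, the second energy constraint $b_n+b_m = b_k+b_l$ collapses to $D_h(n) = D_h(u)$, where $D_h(n) := b_{n+h} - b_n$. Hence
\[
E\bigl(A_N^{(1)};A_N^{(2)}\bigr) \;=\; N^2 + 2\sum_{h=1}^{N-1} \#\bigl\{(n,u) \in [1, N-h]^2 : D_h(n) = D_h(u)\bigr\},
\]
and the task reduces to bounding the level sets of $D_h$ for $h\neq 0$.

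For this I use the strict convexity of $f$. A direct computation gives $f''(x) = \frac{A(\log x)^{A-1}}{x}\bigl(1 + \O(1/\log x)\bigr)$, which is positive and, for $x\geq e$, monotonically decreasing. With $d_h(n) := f(n+h) - f(n)$ and the identity
\[
d_h(n) - d_h(u) \;=\; \int_u^n \bigl(f'(t+h) - f'(t)\bigr)\,dt,
\]
the monotonicity of $f''$ yields $|d_h(n) - d_h(u)| \gg |h|\,|n-u|\,(\log N)^{A-1}/N$ for $n,u \in [1, N]$. Since $D_h = d_h + \O(1)$, the equation $D_h(n) = D_h(u)$ forces $|n-u| \ll N/\bigl(|h|(\log N)^{A-1}\bigr)$. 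Summing the resulting per-pair count $\ll N/(h(\log N)^{A-1}) + 1$ over $n \in [1, N-h]$ and then $h \in [1, N-1]$ gives $E \ll N^2 (\log N)^{2-A} + N^2$, which for $A \in [1,2]$ is $\ll N^2 \log N$ as claimed.

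The step I expect to need the most care is absorbing the $\O(1)$ discrepancy between $D_h$ and $d_h$: the lower bound on $|d_h(n) - d_h(u)|$ has to dominate the constant slack, which is automatic when $|h|(\log N)^{A-1}/N$ is not too small and is handled by the trivial bound ``$N-h$ pairs per $h$'' in the complementary regime; a bounded initial range of $n$ where $f''$ is not yet monotone contributes only $\O(N^2)$. The restriction $A \in [1,2]$ keeps the polylog factor tame and avoids any regime where the smooth-to-discrete approximation becomes awkward, so both Theorem~\ref{infinity-PPC thm} and Theorem~\ref{2-PPC thm2} apply and deliver $\infty$-MPPC and $2$-MPPC for $(n, [n(\log n)^A])$.
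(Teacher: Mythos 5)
Your proof is correct, but it takes a different route from the paper. The paper's proof of Theorem~\ref{thm6} is a one-line citation: it combines Theorems~\ref{infinity-PPC thm} and~\ref{2-PPC thm2} with Corollary~\ref{coro2.3}, which supplies the joint-energy bound $E(F_N;G_N)\ll N^{2+\epsilon}(\log N)^{2-A}$; that corollary is in turn obtained from Proposition~\ref{bound thm} (an $L^4$ exponential-sum argument in the style of Garaev that reduces the energy to counting the solution sets $J_l$ of \eqref{jointeq1}) together with the Garaev--Kueh bound $J_l\ll N(2/(lF''(N))+1)$. You instead bound the joint energy directly: since the first coordinate is $n$ itself, the constraint $n+m=k+l$ is exact, so the quadruple count collapses to level sets of the difference function $D_h(n)=b_{n+h}-b_n$, and the convexity of $x(\log x)^A$ (via the lower bound $|d_h(n)-d_h(u)|\gg |h|\,|n-u|(\log N)^{A-1}/N$) pins $u$ to an interval of length $\ll N/(h(\log N)^{A-1})$ around $n$. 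This is the same convexity phenomenon that drives Garaev--Kueh, but your version is purely combinatorial, needs no Fourier analysis, and exploits the rigidity of the coordinate $a_n^{(1)}=n$ to avoid the $N^{\epsilon}$ loss, yielding the slightly sharper bound $E\ll N^2(\log N)^{2-A}+N^2\ll N^2\log N$. The trade-off is generality: Proposition~\ref{bound thm} handles joint energies of two arbitrary increasing sequences $(f(n)),(g(n))$, whereas your argument is tied to the first component being the identity. Either bound is comfortably within the hypotheses of Theorems~\ref{infinity-PPC thm} and~\ref{2-PPC thm2}, and your handling of the $\O(1)$ slack between $D_h$ and $d_h$ and of the bounded initial range where $f''$ is not yet monotone is adequate.
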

	Currently, we do not know any result showing that $(\{[n(\log n)^A]\alpha\})$ has PPC for any real $\alpha$. However, from \cite[Corollary 1]{Garaev2004} we obtain the associated additive energy $\ll N^3(\log N)^{1-A}$.
	
	\textbf{Notation:}
	Let $h\geq 2$ be an integer, $a$ be a real number and   $\boldsymbol{x}=(x_1,\ldots,x_h),\boldsymbol{y}=(y_1,\ldots,y_h)\in\mathbb{R}^h.$ 
	\begin{itemize}
		\item Coordinate-wise product $\boldsymbol{x}\boldsymbol{y}=(x_1y_1,\ldots,x_hy_h).$
		\item Inner product $\boldsymbol{x}.\boldsymbol{y}=\sum_{1\leq i\leq h}x_iy_i.$
		\item Fractional part $\{\boldsymbol{x}\}=(\{x_1\},\ldots,\{x_h\}).$
		\item Scalar product $a\boldsymbol{x}=(ax_1,\ldots,ax_h).$
		\item $\Sigma_{\boldsymbol{x}\in\mathbb{Z}^h}^{'}$ will mean
			  sum over $\boldsymbol{x}$ with $x_i\neq 0$ for all $i.$
	\end{itemize}
	Further, for $a, b \in \mathbb{Z}$ their greatest common divisor(GCD) is denoted by $(a, b).$ Define $e(x):=e^{2\pi ix}$ for $x \in \mathbb{R}$. 
	
	Let $h\geq 1$. If $a_n^{(i)}=a_n$ for all $i$ then we denote $A_N^{(i)}$'s by $A_N$ and $E(A_N^{(1)};\ldots;A_N^{(h)})$ by $E(A_N).$
	For $\boldsymbol{v}=(v_1,\cdots,v_h), v_i\in\mathbb{Z}$ is non-zero for all $i$, we define the representation function $\mathcal{R}_N(\boldsymbol{v})$ by
	\begin{align}\label{R_N}
		\mathcal{R}_N(\boldsymbol{v}):=\#\{1\leq m\neq n\leq N: a_n^{(i)}-a_m^{(i)}=v_i,\:1\leq i\leq h\}.
	\end{align}
	We use $\mathcal{R}_N(\boldsymbol{v})$ and $\mathcal{R}_N(v_1,\ldots,v_h)$ interchangeably.
	One can see that, 
	\[\sideset{}{'}\sum_{\boldsymbol{v}\in\mathbb{Z}^h}\mathcal{R}_N(\boldsymbol{v})^2\leq E(A_N^{(1)};\ldots;A_N^{(h)}).\]

	\section{Joint additive energy}\label{jadditive}\label{s2}
	Let $A_N^{(i)}$ denote the first $N$ elements of $(a_n^{(i)})$, for $1\leq i\leq d$. It is easy to see that the joint additive energy satisfies the following trivial estimate 
	\[N^2\leq E(A_N^{(1)};\ldots;A_N^{(d)})\leq\min_{1\leq i\leq d}{E(A_N^{(i)})}\leq N^3.
	\]
	From this observation we conclude that for any strictly increasing sequence $A:=(a_n)_{1\leq n\leq N}$ of $N$ natural numbers and any fixed integer $l\geq 2$, $B_l:=\{n^l: 1\leq n\leq N\}$ we have $E(A;B_l)\ll N^{2+\epsilon}$ for any $\epsilon>0$ (since $E(B_l)\ll N^{2+\epsilon}$, see ~\cite{aistleitner2017additive}).
	
	For $1\leq s\leq d$, Vinogradov's mean value $J_{s,d}(N)$ is the number of solutions in $\mathbb{N}$ of the system:
	\[x_1^{i}+\cdots+x_s^i = y_1^{i}+\cdots+y_s^i, \quad 1\leq i\leq d.\] For $B_l:=\{n^l: 1\leq n\leq N\}$, we see that $E(B_1;B_2;\ldots;B_d)$ is equal to the Vinogradov's mean value $J_{2,d}(N)$. 
	Then from the work of Ford and Wooley~\cite{FW} we get
	\begin{equation*}
		E(B_1;B_2;\ldots;B_d)= J_{2,d}(N) \ll N^{2+\epsilon},
	\end{equation*}
	for any $\epsilon>0$.
	A natural question is how small the joint additive energy can be when all the components have considerably large additive energy. For example, \cite[Theorem 1]{Garaev2001} says that the sequence $([n\log n])$ has additive energy $\gg \frac{N^3}{\log N}$, but we show that the joint additive energy of $(n)$ and 
	$([n\log n])$ is much smaller.
	
	Let $(f(n))$, $(g(n))$ be two strictly increasing sequences of natural numbers, and let $F_N$ and $G_N$ be the sets of their first $N$ elements, respectively. For a given positive integer $l$ with $1\leq l< N$, denote by $Q_l:=Q_l(N)$ the number of solutions of the system of equations
	\begin{align}\label{jointeq1}
		\begin{cases}
			f(x)+f(y)=f(x+l)+f(z)\\
			g(x)+g(y)=g(x+l)+g(z),
		\end{cases}
		\text{where } \,1\leq x<x+l\leq z<y\leq N.
	\end{align}
	We have the following bounds for the joint additive energy:
	\begin{proposition}\label{bound thm}
		For any $0<\epsilon<1$ we have
		\[\frac{N^4}{(f(N)+1)(g(N)+1)} \ll E(F_N;G_N)\ll N^{2+\epsilon}+N^\epsilon\displaystyle\sum_{1\leq l\leq N^{1-\epsilon}}Q_l.\]
	\end{proposition}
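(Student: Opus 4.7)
The plan is to prove the lower and upper bounds separately.

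For the lower bound, I would introduce the representation function $r(u,v) := \#\{(a,b) \in [1,N]^2 : f(a)+f(b)=u,\, g(a)+g(b)=v\}$, so that $E(F_N;G_N) = \sum_{u,v} r(u,v)^2$ and $\sum_{u,v} r(u,v) = N^2$. Since $r$ is supported on the at most $(2f(N)+1)(2g(N)+1)$ pairs $(u,v)$ with $0 \leq u \leq 2f(N)$ and $0 \leq v \leq 2g(N)$, Cauchy--Schwarz yields $N^4 \leq (2f(N)+1)(2g(N)+1)\, E(F_N;G_N)$, which is the lower bound.

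For the upper bound, I would split the quadruples $(x,y,z,w)$ counted by $E(F_N;G_N)$ into trivial ones ($\{x,y\}=\{z,w\}$, contributing $O(N^2)$) and non-trivial ones. Strict monotonicity of $f$ and $g$ forces every non-trivial solution, after symmetrizing in the two pairs and within each pair, into the canonical order $x < z \leq w < y$; setting $l := z - x \geq 1$ then identifies it with a triple counted by $J_l$ (upon renaming the canonical $w$ as the $J_l$-variable $z$), so the non-trivial count is $\ll \sum_{l=1}^{N-1} J_l$. It therefore suffices to bound the tail $\sum_{l > N^{1-\epsilon}} J_l$ by $N^{2+\epsilon}$ and to pay at most an $N^\epsilon$ overhead on the remaining small-$l$ sum.

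My plan for the tail is to reparameterize via $(x, l, w, m)$ with $m := y - w \geq 1$, so that the two defining equations become the joint increment condition $f(x+l)-f(x) = f(w+m)-f(w)$ and $g(x+l)-g(x) = g(w+m)-g(w)$, and then to apply a dyadic Cauchy--Schwarz over the two-dimensional increment vector $(U,V) := (f(x+l)-f(x),\, g(x+l)-g(x))$. A divisor-type multiplicity bound on the joint representation function counting $(w,m)$ with $f(w+m)-f(w)=U$ and $g(w+m)-g(w)=V$, combined with the shrinking range of admissible $(w,m)$ when $l$ is large, should simultaneously produce the $N^{2+\epsilon}$ tail bound and the $N^\epsilon$ overhead on the small-$l$ sum. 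The main obstacle is exactly this tail estimate: one must exploit the joint nature of the two equations to save a factor better than $N$ over the trivial cubic bound $J_l \leq (N-l)^3$, and that saving genuinely requires imposing both the $f$- and $g$-constraints on the increment vector simultaneously rather than treating either equation in isolation.
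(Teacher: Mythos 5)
Your lower bound argument is correct and is essentially the paper's own argument in a different guise: the paper writes $N^2$ as the integral of $S^2(\alpha_1,\alpha_2)$ against the kernel $\sum_{m=0}^{2f(N)}e(-m\alpha_1)\sum_{n=0}^{2g(N)}e(-n\alpha_2)$ and applies Cauchy--Schwarz, which is exactly your Cauchy--Schwarz on the representation function $r(u,v)$ over its support of size $(2f(N)+1)(2g(N)+1)$.

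The upper bound, however, has a genuine gap. Your decomposition $E(F_N;G_N)\ll N^2+\sum_{1\leq l\leq N-1}J_l$ is a lossless combinatorial identity, so to reach the stated bound you must prove $\sum_{l>N^{1-\epsilon}}J_l\ll N^{2+\epsilon}+N^{\epsilon}\sum_{l\leq N^{1-\epsilon}}J_l$. Your stated intermediate goal --- bounding the tail by $N^{2+\epsilon}$ alone --- is false in general: take $f(n)=g(n)=n$, for which $J_l\asymp(N-2l)^2$, so that $\sum_{l>N^{1-\epsilon}}J_l\asymp N^3$. The same example defeats the proposed mechanism: the joint representation function counting $(w,m)$ with $f(w+m)-f(w)=U$ and $g(w+m)-g(w)=V$ has multiplicity $\asymp N$ there, not $N^{o(1)}$, so no divisor-type multiplicity bound is available when the only hypothesis is strict monotonicity of $f$ and $g$. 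The paper sidesteps the tail entirely by a different decomposition: it writes $E(F_N;G_N)=\int_{[0,1]^2}|S(\alpha_1,\alpha_2)|^4\,d\boldsymbol{\alpha}$, splits the off-diagonal part of $S^2$ according to which of the $\approx N^{\epsilon}$ intervals $I_s$ of length $N^{1-\epsilon}$ the smaller index $n$ lies in, and applies Cauchy--Schwarz over $s$ \emph{before} expanding the square; after integration only pairs $n,n_1$ lying in a common $I_s$ survive, so the difference $l=|n-n_1|$ is confined to $l\leq N^{1-\epsilon}$ from the outset, and the only price is the factor $N^{\epsilon}$ from Cauchy--Schwarz. You need this device (or a genuine substitute that converts large-$l$ solutions into small-$l$ ones); the tail estimate as you describe it cannot be carried out.
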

	\begin{proof} We adopt the idea of the proof of \cite[Theorem 1]{Garaev2004} and \cite[Theorem 1]{Garaev2001} for the upper and lower bounds, respectively, to obtain these estimates.\\
		For any real $\alpha_1, \alpha_2$ we set 
		\[S(\alpha_1,\alpha_2)=\sum_{1\leq n\leq N}e\big(f(n)\alpha_1+g(n)\alpha_2\big).\] 
		Then, by orthogonality of the exponential function, we obtain
		\begin{equation}\label{4th moment}
			E(F_N;G_N)=\int_{[0,1]^2}|S(\alpha_1,\alpha_2)|^4d\boldsymbol{\alpha}.
		\end{equation}
		For integers $1\leq s\leq[N^\epsilon],$ set
		$I_s:=\{n\in\mathbb{Z}\colon(s-1)N^{1-\epsilon}<n\leq sN^{1-\epsilon}\}$ and $I_{[N^\epsilon]+1}:=\{n\in\mathbb{Z}\colon [N^\epsilon]N^{1-\epsilon}<n\leq N\}.$ Then, by separating diagonal and off-diagonal terms, applying triangular inequality and the partition of $[1,N]$ into $I_s$'s we deduce
		\begin{align*}
			|S(\alpha_1,\alpha_2)|^4 &\ll N^2+\Big|\displaystyle\sum_{1\leq s\leq [N^\epsilon]+1}\sum_{n\in I_s}\sum_{1\leq n<m\leq N}e\big((f(n)+f(m))\alpha_1+(g(n)+g(m))\alpha_2\big)\Big|^2.
		\end{align*}
		Now, by applying Cauchy-Schwarz inequality on the sum over $s$, we get
		\begin{align*}
			|S(\alpha_1,\alpha_2)|^4 \ll N^2+N^\epsilon\displaystyle\sum_{1\leq s\leq [N^\epsilon]+1}\Big|\sum_{n\in I_s}\sum_{1\leq n<m\leq N}e\big((f(n)+f(m))\alpha_1+(g(n)+g(m))\alpha_2\big)\Big|^2.
		\end{align*}
		We apply the above bound of $|S(\alpha_1,\alpha_2)|^4$ in \eqref{4th moment}, expand the squares, and integrate to obtain 
		\begin{align}\label{28/6}
			E(F_N;G_N)\ll N^2+N^\epsilon\displaystyle\sum_{1\leq s\leq [N^\epsilon]+1}\sum_{n,n_1\in I_s}\sum_{\substack{1\leq n<m\leq N\\1\leq n_1<m_1\leq N\\f(n)+f(m)=f(n_1)+f(m_1)\\g(n)+g(m)=g(n_1)+g(m_1)}}1.
		\end{align}
		Observe that $n, n_1 \in I_s$ imply that $|n-n_1|\leq N^{1-\epsilon}$. If $n_1=n$, then $m=m_1$, and in this case the contribution of the sums in the right-hand side of \eqref{28/6} is $N^2$. Otherwise, by writing $|n-n_1|=l$, the contribution of such sums is 
		\[ \ll \sum_{l\leq N^{1-\epsilon}} Q_l.\]
		Combining these estimates with \eqref{28/6}, we obtain the upper bound.
		
		To estimate the lower bound, we start with the identity
		\begin{align*}
			N^2=\int_{[0,1]^2}S^2(\alpha_1,\alpha_2) \sum_{m=0}^{2f(N)}e(-m\alpha_1)  \sum_{n=0}^{2g(N)}e(-n\alpha_2) d\boldsymbol{\alpha}.
		\end{align*}
 The lower bound follows from an application of the Cauchy-Schwarz inequality to the right-hand side.
	\end{proof}
	\begin{theorem}\label{n and F(n) thm}
		Let $f(x)=x$ and $g(x)=[h(x)]$, where $h$ is a real valued function which is three times continuously differentiable on the segment $[1,N]$ with $h^{'}(x)>0$, $h^{''}(x)>0$ and $h^{'''}(x)<0$. For any real $0<\epsilon<1$,
		\[E(F_N;G_N)\ll N^{2+\epsilon}+\frac{N^{1+\epsilon}\log N}{h^{''}(N)}.\]
	\end{theorem}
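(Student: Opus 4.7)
The plan is to apply Proposition~\ref{bound thm} and bound $\sum_{l\leq N^{1-\epsilon}}J_l$ using the strict convexity of $F$. Since the proposition gives $E(F_N;G_N)\ll N^{2+\epsilon}+N^\epsilon\sum_{l\leq N^{1-\epsilon}}J_l$, the theorem reduces to showing that $\sum_{l\leq N^{1-\epsilon}}J_l\ll N^{2-\epsilon}+\frac{N\log N}{F''(N)}$.

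First I would exploit $f(n)=n$: the first equation in the system defining $J_l$ collapses to $z=y-l$, so the second equation becomes $[F(y)]-[F(y-l)]=[F(x+l)]-[F(x)]$. Setting $h(t):=F(t+l)-F(t)$ and using $[F(u)]=F(u)+\O(1)$, this forces $|h(y-l)-h(x)|<2$, absorbing all four floor functions into a controlled $\O(1)$ slippage.

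Next comes the crucial estimate: by the mean value theorem, $h'(t)=F'(t+l)-F'(t)=lF''(\xi)$ for some $\xi\in(t,t+l)$, and the hypothesis $F'''<0$ makes $F''$ decreasing on $[1,N]$, yielding the uniform lower bound $h'(t)\geq lF''(N)$ throughout the relevant range. Hence $h$ is strictly monotone, so for each fixed $x$ the set of real $t$ with $|h(t)-h(x)|<2$ is an interval of length at most $4/(lF''(N))$. This bounds the number of valid integers $y$ by $\O(1+1/(lF''(N)))$; summing over $x\in[1,N]$ gives $J_l\ll N+N/(lF''(N))$, and then summing over $l\leq N^{1-\epsilon}$ produces the claimed bound on $\sum_l J_l$, with the harmonic sum yielding the $\log N$ factor.

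The main difficulty I anticipate is ensuring that the $\O(1)$ error from the floor functions does not swamp the main term when $lF''(N)$ is small; this is precisely why one needs both the strict monotonicity of $h$ \emph{and} an explicit lower bound on $h'$ that is uniform across $[1,N]$, which the assumption $F'''<0$ supplies (merely $F''>0$ would not suffice, since $F''$ could become arbitrarily small near $N$). The remaining steps are routine: substituting the bound on $\sum_l J_l$ into Proposition~\ref{bound thm} and absorbing $N^2$ into $N^{2+\epsilon}$ completes the argument.
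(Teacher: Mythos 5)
Your proposal is correct and follows essentially the same route as the paper: both reduce the theorem to Proposition~\ref{bound thm} plus the bound $J_l\ll N\left(1+\frac{1}{lF''(N)}\right)$, which the paper obtains by citing the proof of Garaev--Kueh and which you reprove directly via the monotonicity of $h(t)=F(t+l)-F(t)$ and the uniform lower bound $h'(t)\geq lF''(N)$ coming from $F'''<0$. The only difference is that you supply the details of that cited step, and your accounting of the floor-function error and the harmonic sum over $l$ matches the paper's computation.
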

	The following corollary is immediate.
	\begin{corollary}\label{coro2.3}
		Let $1\leq A \leq 2$ and $0<\epsilon<1$. 
		If $h(n)=n(\log n)^A,$ then \[E(F_N;G_N)\ll N^{2+\epsilon}(\log{N})^{2-A}.\]
	\end{corollary}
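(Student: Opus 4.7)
The plan is to invoke Theorem~\ref{n and F(n) thm} directly with $F(n) = n(\log n)^A$. My first task is to verify that this $F$ satisfies the regularity hypotheses there, namely $F', F'' > 0$ and $F''' < 0$ on $[1,N]$ (at least for $n$ bounded below by some absolute constant, with any small-$n$ discrepancy absorbed by the trivial bound $O(1)$ per term). A direct differentiation gives
\[
F'(n) = (\log n)^{A-1}\bigl(\log n + A\bigr), \qquad F''(n) = \frac{A(\log n)^{A-2}(\log n + A - 1)}{n},
\]
both positive for $n \geq e$. A further differentiation of $F''(n)$ will produce terms of the shape $n^{-2}(\log n)^{A-2}\bigl((A-1) - \log n\bigr)$ and similar, all of which are negative once $\log n$ exceeds the constants $A$, $A-1$. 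Since $1 \leq A \leq 2$, this is automatic for $n \geq e^2$, so the hypotheses of Theorem~\ref{n and F(n) thm} hold on $[N_0, N]$ for an absolute $N_0$.

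Next I would substitute the asymptotic
\[
F''(N) \sim \frac{A(\log N)^{A-1}}{N} \quad (N \to \infty)
\]
into the conclusion of Theorem~\ref{n and F(n) thm}. This gives
\[
\frac{N^{1+\epsilon} \log N}{F''(N)} \ll \frac{N^{2+\epsilon}(\log N)^{2-A}}{A},
\]
so that
\[
E(F_N;G_N) \ll N^{2+\epsilon} + N^{2+\epsilon}(\log N)^{2-A}.
\]
Because the range $1 \leq A \leq 2$ forces $2 - A \geq 0$, we have $(\log N)^{2-A} \geq 1$ for $N \geq e$, hence the second term dominates and we obtain $E(F_N;G_N) \ll N^{2+\epsilon}(\log N)^{2-A}$.

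There is essentially no obstacle: the corollary is a calculation once one trusts Theorem~\ref{n and F(n) thm}. The only mild care point is the sign condition $F''' < 0$, which fails for very small $n$ when $A$ is close to $1$; but this is resolved by applying the theorem on $[N_0, N]$ and absorbing the finite initial segment into the error, since the contribution of quadruples involving indices below $N_0$ is $O(1)$ per such index and the overall energy is bounded by any polynomial $N^{2+\epsilon}$ trivially.
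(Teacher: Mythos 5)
Your proposal is correct and follows exactly the route the paper intends: the paper gives no explicit proof of Corollary~\ref{coro2.3}, calling it ``immediate'' from Theorem~\ref{n and F(n) thm}, and your verification of the sign conditions on $F'$, $F''$, $F'''$ together with the substitution $F''(N)\asymp A(\log N)^{A-1}/N$ is precisely that computation. (The only quibble is that the contribution of quadruples with an index below $N_0$ is $\O(N^2)$ per such index rather than $\O(1)$, but it is still absorbed into $N^{2+\epsilon}$ as you conclude.)
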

	\begin{proof}[Proof of Theorem \ref{n and F(n) thm}]
		Here we take  $f(n)=n$ and $g(n)=[h(n)]$, so $Q_l$ in  \eqref{jointeq1}  reduces to the  number of solutions of the equation
		\begin{align*}
			[h(n)]+[h(m+l)]=[h(n+l)]+[h(m)], \mbox{ with }1\leq n<n+l\leq m<m+l\leq N.
		\end{align*}
		Now, following the proof of \cite[Theorem 1]{Garaev2001} we get
		\[Q_l\ll N\Big(\frac{2}{lh^{''}(N)}+1\Big).
		\]
		Combining this bound with Proposition \ref{bound thm} we obtain the required upper bound.
	\end{proof}
	
	\section{``Generalized" GCD Sums}
	Given a function $f:\mathbb{N}\rightarrow \mathbb{C}$ with finite support and $\alpha \in (0, 1]$, the so-called GCD sum (also
known as G\'{a}l sum) is defined as 
	\[S_f(\alpha):=\sum_{a, b} f(a)\overline{f(b)} \frac{(a, b)^{2\alpha}}{(ab)^\alpha}.\] 
	This sum plays a key role in finding large values of the Riemann zeta function (see \cite{BS,BT} ).  Moreover, it is connected to the
theory of equidistribution and pair correlation (for instance, see \cite{aistleitner2017additive, Bloom2019GCDSA}). 
	We introduce here a new type of GCD sum in higher dimensions. Let $f:\mathbb{N}^d \rightarrow \mathbb{C}$ be a function with finite support and $\alpha \in (0, 1]$. Define $d$ dimensional GCD sum
	\[S_f(d;\alpha):= \displaystyle\sum_{\boldsymbol{a},\boldsymbol{b} \in \mathbb{N}^d}f(\boldsymbol{a})\overline{f(\boldsymbol{b})}\prod_{i=1}^d\frac{(a_i,b_i)^{2\alpha}}{(a_ib_i)^\alpha}.\]
	The next result provides an upper bound for $S_f(d;\alpha)$.
	
	\begin{proposition}\label{gen gcd lemma}
		Let $f:\mathbb{N}^d\to\mathbb{C}$ be an arithmetic function with finite support of cardinality $K.$ Then we have the following estimates:
		\begin{align*}
			S_f(d;\alpha) \ll \begin{cases}
				(\log\log K)^{\O(1)}\|f\|_2^2, & \mbox{ if } \,\alpha =1,\\
				\exp\left(C(\alpha)(\log K)^{1-\alpha}(\log\log K)^{-\alpha}\right)\|f\|_2^2, & \mbox{ if } \,\frac{1}{2}<\alpha<1,
			\end{cases}
		\end{align*}
		where $C(\alpha)$ is an absolute positive constant depending on $\alpha$.
		
		Furthermore, if $f$ is a positive real-valued satisfying $\|f\|_1\geq 3$ and $K\geq\log\|f\|_1,$ then 
		\[ S_{f}\left(d; {1}/{2}\right) \ll \frac{\exp(C(\log K \log\log \|f\|_1)^{1/2})}{(\log \|f\|_1+O(1))^d}\|f\|_2^2,\] 
		for some positive absolute constant $C$.
	\end{proposition}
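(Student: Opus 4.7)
The plan is to reduce the $d$-dimensional GCD sum to the one-dimensional case by induction on $d$, using that the kernel
$$K_\alpha(a,b):=\frac{(a,b)^{2\alpha}}{(ab)^{\alpha}}$$
is positive semi-definite on $\mathbb{N}\times\mathbb{N}$. Factoring over primes gives $K_\alpha(a,b)=\prod_{p}p^{-\alpha|v_p(a)-v_p(b)|}$, a tensor product of Laplace-type kernels of positive type; equivalently $(a,b)^{2\alpha}=\sum_{d\mid(a,b)}J_{2\alpha}(d)$ with $J_{2\alpha}(d)\ge 0$. Consequently the tensor kernel $\prod_{i=1}^{d}K_\alpha(a_i,b_i)$ on $\mathbb{N}^d\times\mathbb{N}^d$ is also positive semi-definite, so for any $h,h'\colon\mathbb{N}^{d-1}\to\mathbb{C}$ with finite support the associated bilinear form satisfies Cauchy--Schwarz:
$$\Big|\sum_{\mathbf{y},\mathbf{z}\in\mathbb{N}^{d-1}}h(\mathbf{y})\,\overline{h'(\mathbf{z})}\prod_{i=1}^{d-1}K_\alpha(y_i,z_i)\Big|^2\le S_h(d-1;\alpha)\,S_{h'}(d-1;\alpha).$$

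Next I would split off the last coordinate. Writing $f_x(\mathbf{y}):=f(\mathbf{y},x)$ for $x\in\mathbb{N}$ and $\mathbf{y}\in\mathbb{N}^{d-1}$, one has
$$S_f(d;\alpha)=\sum_{a_d,b_d}K_\alpha(a_d,b_d)\,T(a_d,b_d),\qquad T(a_d,b_d):=\sum_{\mathbf{a}',\mathbf{b}'}f_{a_d}(\mathbf{a}')\overline{f_{b_d}(\mathbf{b}')}\prod_{i=1}^{d-1}K_\alpha(a_i,b_i).$$
By the Cauchy--Schwarz above and the inductive hypothesis (each slice has support of cardinality at most $K$), $|T(a_d,b_d)|\le B(\alpha,K)^{d-1}\|f_{a_d}\|_2\|f_{b_d}\|_2$, where $B(\alpha,K)$ denotes the one-dimensional bound. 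Setting $g(x):=\|f_x\|_2$ one has $|\mathrm{supp}(g)|\le K$ and $\|g\|_2^2=\|f\|_2^2$, so
$$S_f(d;\alpha)\le B(\alpha,K)^{d-1}\sum_{a_d,b_d}g(a_d)g(b_d)K_\alpha(a_d,b_d)=B(\alpha,K)^{d-1}S_g(1;\alpha)\le B(\alpha,K)^{d}\|f\|_2^2.$$

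For $\alpha=1$, G\'al's theorem (sharpened by Aistleitner--Berkes--Seip) gives $B(1,K)\ll(\log\log K)^{O(1)}$; for $\tfrac12<\alpha<1$, the Hilberdink / Bondarenko--Seip estimate gives $B(\alpha,K)\ll\exp(C(\alpha)(\log K)^{1-\alpha}(\log\log K)^{-\alpha})$. Taking $d$-th powers and absorbing $d$ into the implicit constants yields the first two bounds. The main obstacle is the endpoint $\alpha=\tfrac12$ for positive real $f$, where the relevant one-dimensional estimate of Bondarenko--Seip reads
$$S_h(1;\tfrac12)\ll\frac{\exp\!\big(C(\log K\log\log\|h\|_1)^{1/2}\big)}{\log\|h\|_1+O(1)}\,\|h\|_2^2,$$
and the denominator $\log\|h\|_1$ is crucial. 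The natural auxiliary $g(x)=\|f_x\|_2$ does not preserve the $\ell^1$-information of $f$, so to recover a full $(\log\|f\|_1)^d$ one must track the $\ell^1$-mass of each slice $f_{x_d}$ throughout the induction, for instance by applying a weighted Cauchy--Schwarz or by interpolating via H\"older between $\|f_x\|_1$ and $\|f_x\|_2$, and then using $\sum_{x_d}\|f_{x_d}\|_1=\|f\|_1$ to reassemble the estimate. This bookkeeping of the $\ell^1$-norm across slices, rather than any new analytic input, is the heart of the argument at the endpoint.
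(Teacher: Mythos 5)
Your tensorization argument for the cases $\alpha=1$ and $\tfrac12<\alpha<1$ is correct and takes a genuinely different route from the paper's. The kernel $(a,b)^{2\alpha}(ab)^{-\alpha}$ is indeed positive semi-definite (write $(a,b)^{2\alpha}=\sum_{e\mid(a,b)}J_{2\alpha}(e)$ with $J_{2\alpha}(e)=e^{2\alpha}\prod_{p\mid e}(1-p^{-2\alpha})\ge 0$, so the form is $\sum_e J_{2\alpha}(e)\,|\sum_{e\mid a}h(a)a^{-\alpha}|^2$), hence the slice-wise Cauchy--Schwarz, the reduction to $g(x)=\|f_x\|_2$ with $\|g\|_2=\|f\|_2$, and the quotation of the one-dimensional bounds of G\'al/Lewko--Radziwi{\l\l} and Bondarenko--Seip are all legitimate; the resulting $B(\alpha,K)^d$ has the stated shape since $d$ is fixed. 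The paper instead runs the Lewko--Radziwi{\l\l} random model directly in several variables: it introduces independent Steinhaus random multiplicative functions $X,Y$, proves the identity $\mathbb{E}\big[|\zeta_X(\alpha)\zeta_Y(\alpha)D(X,Y)|^2\big]=\zeta(2\alpha)^2S_f(2;\alpha)$, and optimizes over a truncation level $V$ and a moment order $l$ using the moment bounds for $\zeta_X(\alpha)$. Your route is shorter for these two cases and avoids redoing the probabilistic computation in higher dimension.

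However, the endpoint $\alpha=\tfrac12$ --- the only case of the proposition actually invoked in the proofs of Theorem~\ref{infinity-PPC thm} and Theorem~\ref{2-PPC thm2} --- is left as a plan rather than a proof, and the repair you sketch points in the wrong direction. Tracking the $\ell^1$-mass of the slices $f_{x_d}$ cannot work as stated: an individual slice may have $\|f_{x_d}\|_1$ bounded (even a single support point), so the one-dimensional endpoint bound applied to that slice yields no $(\log\|f_{x_d}\|_1)^{-1}$ saving, and no reassembly via $\sum_{x_d}\|f_{x_d}\|_1=\|f\|_1$ can then produce $(\log\|f\|_1)^{-d}$. The mechanism in the paper is different: one never tensorizes at $\alpha=\tfrac12$. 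Instead one works at $\alpha=\tfrac12+1/\log\|f\|_1$, where each coordinate contributes a factor $\zeta(2\alpha)^{-1}\asymp(\log\|f\|_1)^{-1}$ coming from the identity above; the moment bound $\log\mathbb{E}\big[|\zeta_X(\alpha)|^{2l}\big]\ll l^2\log\big((\alpha-\tfrac12)^{-1}\big)$ then gives $S_f(d;\alpha)\ll \zeta(2\alpha)^{-d}\exp\big(C(\log K\log\log\|f\|_1)^{1/2}\big)\|f\|_2^2$, and one passes to $\alpha=\tfrac12$ only at the very end by a single global H\"older inequality $S_f(d;\tfrac12)\le S_f(d;\alpha)^{1/(2\alpha)}\|f\|_1^{2(1-1/(2\alpha))}$, whose last factor is $O(1)$ for this choice of $\alpha$. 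You could graft this onto your induction, but only if the one-dimensional input you cite near the endpoint carries the $\zeta(2\alpha)^{-1}$ factor explicitly; the standard $\alpha=\tfrac12$ G\'al-type bound does not, so as written the endpoint case is a genuine gap.
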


\begin{example}
	Let $\alpha=1$, $d=2$ and $f=\mathbbm{1}_{[1,N]^2}$.
	Then, 
	\begin{align*}
		S_f(2;1)= \sum_{\boldsymbol{a},\boldsymbol{b} \in [1,N]^2}\prod_{i=1}^2\frac{(a_i,b_i)^{2}}{a_ib_i} = \left(\sum_{a,b\in [1,N]}\frac{(a,b)^2}{a b}\right)^2.
	\end{align*}
	Then by applying a result of G\'al~\cite{gel}, we have 
	\begin{align*}
		S_f(2;1)\ll N^2(\log \log {N})^4.
	\end{align*}
\end{example}
	
	For simplicity, we prove Proposition~\ref{gen gcd lemma} for the case $d=2$ by extending an idea of a random model given by Lewko and Radziwi{\l\l} in \cite{lewko2017refinements}. For $d\geq 3$, the proof is a straightforward extension of the method.
	
	Let $\{X(p): p \text{ prime}\}$ and $\{Y(p): p \text{ prime}\}$ be two collections of independent random variables uniformly distributed on $\mathbb{S}^1$. Also, assume that $\{X(p),Y(p): p\text{ prime}\}$ is an independent collection. For every $n\in\mathbb{N}$, we define $X(n):=\prod_{p^a\parallel n}X(p)^a$ and similarly $Y(n):=\prod_{p^a\parallel n}Y(p)^a$. The random zeta function associated with $X(n)$ is defined by 
	\[\zeta_X(\alpha):=\displaystyle\sum_{n\geq 1}\frac{X(n)}{n^\alpha}, \text{ where $\alpha>1/2,$} \]
	and $\zeta_Y(\alpha)$ is defined similarly.
	For fixed $\alpha>1/2$, these series converge a.e. by Kolmogorov three series theorem (see \cite[Theorem ~15.51]{K}). 
	
	We recall the moment estimates of $\zeta_X(\alpha)$ from Lemma~ 7 of \cite{Bloom2019GCDSA} and use them to prove Proposition~\ref{gen gcd lemma}.
	\begin{lemma}\label{moment lemma} For a real number $l$,
		\begin{equation*}
			\log\mathbb{E}\big[|\zeta_X(\alpha)|^{2l}\big]\ll\begin{cases}
				l\log\log l, \quad & \mbox{if }\,\alpha=1, \\
				C'(\alpha)l^{1/\alpha}(\log l)^{-1}, \quad &\mbox{if }\,  1/2<\alpha<1,\\
				l^2\log((\alpha-1/2)^{-1}), \quad & \mbox{if }\, \frac{1}{2}<\alpha,
			\end{cases}
		\end{equation*}
		where $C'(\alpha)$ is a positive constant, $l\geq 3$ for first two cases and $l\geq 1$ for the final case.
	\end{lemma}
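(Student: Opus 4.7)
The plan is to follow the Lewko--Radziwi{\l\l} random-model strategy for GCD sums, extended to higher dimension via tensorization. I focus on $d=2$ as the paper suggests; for $d\geq 3$ one introduces additional independent copies $\{X^{(i)}(p)\}$ and argues analogously. Since the GCD kernel factorizes across coordinates as $K_\alpha(a_1,b_1)K_\alpha(a_2,b_2)$ with $K_\alpha(a,b)=(a,b)^{2\alpha}/(ab)^\alpha$, the quantity $S_f(2;\alpha)$ is the quadratic form of $T_\alpha \otimes T_\alpha$ acting on $\ell^2(\mathbb{N}^2)$, where $T_\alpha$ is the 1D GCD operator. In particular, for $f$ with support of cardinality $K$ in $\mathbb{N}^2$ (so each coordinate projection has cardinality at most $K$),
\[ S_f(d;\alpha) \;\leq\; \|T_{\alpha,K}\|_{\mathrm{op}}^{\,d}\,\|f\|_2^2, \]
where $T_{\alpha,K}$ denotes $T_\alpha$ restricted to functions supported in sets of size $K$. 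Thus the problem reduces to bounding the 1D operator norm in each of the three regimes.

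The 1D bound is extracted from the random model via the comparison
\[ \|T_{\alpha,K}\|_{\mathrm{op}}^{\,l} \;\ll\; K \cdot \mathbb{E}\,|\zeta_X(\alpha)|^{2l} \qquad (l\in\mathbb{Z}_{\geq 1}). \]
The factor $K$ on the right comes from the support restriction, while the moment comes from the multiplicative expansion of $T_\alpha^l$ along primes, decoupled by the independent Steinhaus variables $X(p)$; this is the standard identification $\mathbb{E}|\zeta_X(\alpha)|^{2l}=\sum_n d_l(n)^2/n^{2\alpha}$. Inserting Lemma~\ref{moment lemma} and optimizing $l$ handles the first two regimes. For $\alpha=1$, take $l\asymp\log K$, so $K^{1/l}=O(1)$ and $(\mathbb{E}|\zeta_X|^{2l})^{1/l}\ll \exp(C\log\log l)=(\log\log K)^{O(1)}$. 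For $1/2<\alpha<1$, balance $(\log K)/l$ against $l^{1/\alpha-1}/\log l$ by choosing $l\asymp(\log K)^\alpha(\log\log K)^\alpha$; this gives per coordinate $\exp(C(\alpha)(\log K)^{1-\alpha}(\log\log K)^{-\alpha})$, whose $d$-th power absorbs into the constant $C(\alpha)$.

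For the positive-real case at $\alpha=1/2$, apply a Selberg shift $\alpha=1/2+c/\log\|f\|_1$ combined with the third case of Lemma~\ref{moment lemma}. The moment bound becomes $\exp(Cl^2\log\log\|f\|_1)$; optimizing $l\asymp(\log K/\log\log\|f\|_1)^{1/2}$ balances this against $K^{1/l}$ and yields the numerator $\exp(C(\log K\log\log\|f\|_1)^{1/2})$, while the Dirichlet-series renormalization attached to the shift contributes an extra $(\log\|f\|_1+O(1))^{-1}$ per coordinate, tensorizing to the full denominator $(\log\|f\|_1+O(1))^d$. The main obstacle throughout is the 1D comparison above: the presence of the factor $K$ on the right is essential for producing $(\log K)^{1-\alpha}$ in the exponent after optimization, and extracting it requires a careful Euler-product majorant beyond naive Cauchy--Schwarz (this is precisely Lewko--Radziwi{\l\l}'s refinement over G\'al and Bondarenko--Seip). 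At $\alpha=1/2$, the additional delicacy is ensuring the Selberg shift saves one $(\log\|f\|_1)^{-1}$ per tensor coordinate rather than merely a single factor overall, which leverages positivity of $f$ and the tensor factorization in an essential way.
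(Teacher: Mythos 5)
Your proposal does not prove the stated lemma; it sketches a proof of Proposition~\ref{gen gcd lemma} while explicitly assuming the lemma as an input. The statement in question is the moment bound for the random Euler product itself, namely an estimate for $\log\mathbb{E}\bigl[|\zeta_X(\alpha)|^{2l}\bigr]$ where $\zeta_X(\alpha)=\sum_n X(n)n^{-\alpha}$ and $X$ is a Steinhaus random multiplicative function. Your argument instead tensorizes the GCD kernel, reduces $S_f(d;\alpha)$ to a one-dimensional operator norm, compares that norm to $K\cdot\mathbb{E}|\zeta_X(\alpha)|^{2l}$, and then says ``inserting Lemma~\ref{moment lemma} and optimizing $l$ handles the first two regimes.'' That is circular with respect to the task: the quantity you are asked to bound appears on the right-hand side of your comparison as a black box, and everything you write afterwards (the choices $l\asymp\log K$, $l\asymp(\log K)^\alpha(\log\log K)^\alpha$, the Selberg shift at $\alpha=1/2$) belongs to the proof of the proposition, not of the lemma.

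What is actually required is an estimate of the moments themselves. You do mention the identity $\mathbb{E}|\zeta_X(\alpha)|^{2l}=\sum_n d_l(n)^2 n^{-2\alpha}$ in passing (valid for integer $l$; for real $l$ one additionally needs log-convexity of moments or a direct treatment of the divisor function with real index), but the lemma is precisely the assertion that the resulting Euler product $\prod_p\sum_{k\ge 0}\binom{l+k-1}{k}^2p^{-2k\alpha}$ has logarithm $\ll l\log\log l$ when $\alpha=1$, $\ll l^{1/\alpha}(\log l)^{-1}$ when $1/2<\alpha<1$, and $\ll l^2\log((\alpha-1/2)^{-1})$ as $\alpha\to 1/2$. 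Establishing these bounds requires splitting the primes at roughly $p\le l^{1/\alpha}$, bounding the local factors via $\binom{l+k-1}{k}\le l^k$ together with Mertens-type estimates on the small primes, and controlling the tail $\sum_{p>y}l^2p^{-2\alpha}$ on the large primes; none of this appears in your write-up. For what it is worth, the paper does not prove the lemma either: it imports it verbatim from Lemma 7 of Bloom--Walker \cite{Bloom2019GCDSA}. But a blind proof attempt for this statement must supply that computation (or an honest reduction to it), not re-derive the downstream GCD-sum proposition from it.
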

	
	\begin{proof}[Proof of Proposition~\ref{gen gcd lemma}]
		Let us start by defining the double sum
		$$D(X,Y):=\displaystyle\sum_{a,b}f(a,b)X(a)Y(b).$$
		Then, we look at the expectation of $|\zeta_X(\alpha)\zeta_Y(\alpha)D(X,Y)|^2,$ expressed as
		\begin{align*}
			\mathbb{E}\big[|\zeta_X(\alpha)\zeta_Y(\alpha)D(X,Y)|^2\big]=& \displaystyle\sum_{m_1,n_1,m_2,n_2,a,b,c,d}\frac{f(a,b)\overline{f(c,d)}}{m_1^\alpha n_1^\alpha m_2^\alpha n_2^\alpha}\mathbbm{1}_{n_1a=n_2c}\mathbbm{1}_{m_1b=m_2d}.
		\end{align*}
		Now, indicator functions allow us to write 
		$$n_1=\frac{h_1c}{(a,\, c)},\, n_2=\frac{h_1a}{(a,\, c)} \text{ and } m_1=\frac{h_2d}{(b,\, d)}, \,m_2=\frac{h_2b}{(b,\, d)}$$ for positive integers $h_1, h_2$. Thus,
		\begin{align}\label{eq0}
			\mathbb{E}\big[|\zeta_X(\alpha)\zeta_Y(\alpha)D(X,Y)|^2\big]
			=& \sum_{h_1,h_2,a,b,c,d}f(a,b)\overline{f(c,d)}\frac{(a,c)^{2\alpha}(b,d)^{2\alpha}}{(acbd)^\alpha}\frac{1}{h_1^{2\alpha}h_2^{2\alpha}}\\
			\nonumber	= &  \displaystyle\zeta(2\alpha)^2S_f(2;\alpha).
		\end{align}
		Also, we notice that $\mathbb{E}\big[|D(X,Y)|^2\big]=\|f\|_2^2$.
		Let $V$ and $l$ be positive real parameters to be chosen later. Consider the event $\mathcal{A}=(|\zeta_X(\alpha)|<V,|\zeta_Y(\alpha)|<V)$. Now by splitting the expectation over $\mathcal{A}$ and its complement, we get
		\begin{align}\label{eq1}
			\mathbb{E}\big[|\zeta_X(\alpha)\zeta_Y(\alpha)D(X,Y)|^2\big] \leq&V^4\|f\|_2^2+V^{2-2l}\mathbb{E}\big[|\zeta_Y(\alpha)|^{2+2l}|D(X,Y)|^2\big]\\
			&+V^{2-2l}\mathbb{E}\big[|\zeta_X(\alpha)|^{2+2l}|D(X,Y)|^2\big]\nonumber\\
			&+V^{-4l}\mathbb{E}\big[|\zeta_X(\alpha)|^{2+2l}|\zeta_Y(\alpha)|^{2+2l}|D(X,Y)|^2\big]. \nonumber
		\end{align}
		By Cauchy-Schwarz inequality, $|D(X,Y)|^2\leq \|f\|_2^2K$. Using the inequality above, we get:
		\begin{align}\label{eq2}
			\mathbb{E}\big[|\zeta_X(\alpha)\zeta_Y(\alpha)D(X,Y)|^2\big] \leq &\|f\|_2^2 \,(V^4 +V^{2-2l}K\mathbb{E}\big[|\zeta_Y(\alpha)|^{2+2l}\big]\\
			&+V^{2-2l}K\mathbb{E}\big[|\zeta_X(\alpha)|^{2+2l}\big]
			+V^{-4l}K\mathbb{E}\big[|\zeta_X(\alpha)|^{2+2l}|\zeta_Y(\alpha)|^{2+2l}\big]. \nonumber
		\end{align}
		\begin{case}{ $\alpha=1.$} Using Lemma~\ref{moment lemma} in (\ref{eq2}) we get the upper bound
			\begin{align*}
				\|f\|_2^2(V^4+V^{2-2l}K\exp(Cl\log\log l)).
			\end{align*}
			We choose $l=\log K+3$ and $V=(\log l)^C$, where $C$ is a large positive constant, and obtain
			\begin{align*}
				\mathbb{E}\big[|\zeta_X(\alpha)\zeta_Y(\alpha)D(X,Y)|^2\big] \leq \|f\|_2^2(\log\log K)^{O(1)}.
			\end{align*}
		\end{case}
		
		\begin{case}{ $\frac{1}{2} < \alpha <1$}.  Applying Lemma~\ref{moment lemma} in (\ref{eq2}) to get
			\begin{align*}
				\mathbb{E}\big[|\zeta_X(\alpha)\zeta_Y(\alpha)D(X,Y)|^2\big]\leq \|f\|_2^2\,(V^4+V^{2-2l}K\exp(C'(\alpha)l^{1/\alpha}(\log l)^{-1})).
			\end{align*}
			Choosing $l=(\log K)^\alpha(\log\log K)^\alpha+3$ and $V= \exp(C(\alpha)l^{-1+1/\alpha}(\log l)^{-1})$ we get the upper bound
			\begin{align*}
				\|f\|_2^2\, \exp{(C(\alpha)(\log K)^{1-\alpha}(\log\log K)^{-\alpha})}.
			\end{align*}
		\end{case}
		
		\begin{case}{ $\alpha =\frac{1}{2}$.} Let $\beta=\frac{1}{2}+\frac{1}{\log \|f\|_1}$. Then, (\ref{eq2}) and Lemma \ref{moment lemma} give us
			\begin{align}\label{eq4}
				\mathbb{E}\big[|\zeta_X(\beta)\zeta_Y(\beta)D(X,Y)|^2\big] \leq&\|f\|_2^2\big(V^4+V^{2-2l}K\exp(C'l^2\log\log\|f\|_1)\big).
			\end{align}
			
			By H\"{o}lder's inequality we get
			\begin{align}\label{eq3}
				S_f(2, 1/2)\leq \Big(S_f(2,\beta)\Big)^{\frac{1}{2\beta}}\|f\|_1^{2(1-\frac{1}{2\beta})}
				\ll \Big(S_f(2,\beta)\Big)^{\frac{1}{2\beta}}.
			\end{align}
			For $s \rightarrow 1$ we have \[\zeta(s)=\frac{1}{s-1}+O(1).\] Using this in \eqref{eq0} and combining with \eqref{eq4}) and \eqref{eq3}, we obtain the upper bound
			\begin{align}\label{eq5}
				S_f(2, 1/2) \ll	(\log\|f\|_1+O(1))^{-2}\|f\|_2^2\big(V^4+V^{2-2l}K\exp(C'l^2\log\log\|f\|_1)\big).
			\end{align}
			Hence we choose $l=(\log K)^{1/2}(\log\log\|f\|_1)^{-1/2}$ and $V=\exp(C''(l\log\log \|f\|_1))$ in \eqref{eq5} to get the upper bound
			\[S_f(2, 1/2) \ll (\log\|f\|_1+O(1))^{-2}\|f\|_2^2\exp(C(\log K\log\log \|f\|_1)^{1/2}).\]
		\end{case}
		\noindent This completes the proof.
	\end{proof}

	\section{Proof of Theorem \ref{infinity-PPC thm}}
	\begin{proof}
		Let $s>0$ be fixed and $N\geq (2s)^d$ be an integer. For $\boldsymbol{\alpha}\in \mathbb{R}^d $ and  $\boldsymbol{a}_n\in \mathbb{N}^d$, consider the sequence $(\boldsymbol{x_n})=(\{\boldsymbol{a}_n\boldsymbol{\alpha}\})$. Then,
		\begin{align*}
			R_{2,\infty}^{(d)}(s,\boldsymbol{\alpha},N)=\frac{1}{N}\displaystyle\sum_{1\leq m\neq n\leq N}\chi_{s,N}(\boldsymbol{\alpha}(\boldsymbol{a}_m-\boldsymbol{a}_n)),
		\end{align*}
		where $\chi_{s,N}$ is a characteristic function defined for $\boldsymbol{x}\in \mathbb{R}^d$ as follows:
		\begin{align*}
			\chi_{s,N}(\boldsymbol{x})=\begin{cases}
				1\quad& \text{if } \|\boldsymbol{x}\|_\infty\leq s/N^{1/d},\\
				0 \quad& \text{otherwise.}
			\end{cases}
		\end{align*}
		The Fourier series expansion of $\chi_{s,N}$ is given by
		\begin{align}\label{char}
			\chi_{s,N}(\boldsymbol{\alpha})\sim\displaystyle\sum_{\substack{\boldsymbol{r}\in\mathbb{Z}^d\\ \boldsymbol{r}=(r_1,\cdots,r_d)}}c_{\boldsymbol{r}}e(\boldsymbol{r}.\boldsymbol{\alpha}),
		\end{align}
		where
		\begin{align*}
			c_{\boldsymbol{r}}=c_{\boldsymbol{r},s}=&\displaystyle\int_{-s/N^{1/d}}^{s/N^{1/d}}\cdots\int_{-s/N^{1/d}}^{s/N^{1/d}}e{\Big(-\sum_{i\leq d}r_i\alpha_i\Big)}d\alpha_1\cdots d\alpha_d\nonumber\\
			=& c_{r_1}\cdots c_{r_d},
		\end{align*}
		and 
		\[c_{r_j}=\displaystyle\int_{-s/N^{1/d}}^{s/N^{1/d}}e{(-r_j\alpha_j)}d\alpha_j, \quad j=1,2,\ldots,d.
		\]
		
		Further, one gets the following upper bound:
		\begin{align*}
			|c_{r_j}|\leq\min{\Big(2s N^{-\frac{1}{d}},\, |r_j|^{-1}\Big)}.
		\end{align*}
		A straightforward calculation gives the expectation:
		\begin{align*}
			\mathbb{E}\big[R_{2,\infty}^{(d)}(s,.,N)\big]=\displaystyle\int_{[0,1)^d}R_{2,\infty}^{(d)}(s,\boldsymbol{\alpha},N)d\boldsymbol{\alpha}=(2s)^d\frac{N-1}{N}.
		\end{align*}
		Now, the variance of $R_{2,\infty}^{(d)}(s,\boldsymbol{\alpha},N) $ is defined as 
		\begin{align*}
			\Var(R_{2,\infty}^{(d)}(s,.,N)) :=\int_{[0,1)^d}\bigg(R_{2,\infty}^{(d)}(s,\boldsymbol{\alpha},N)-\frac{(2s)^d(N-1)}{N}\bigg)^2d\boldsymbol{\alpha}.
		\end{align*}
		By using Fourier series expansion of $\chi_{s,N}$ from \eqref{char}, we write
		\begin{align*}
			\Var(R_{2,\infty}^{(d)}(s,.,N)) = \frac{1}{N^2}\displaystyle\int_{[0,1)^d}\Bigg(\sum_{1\leq m\neq n\leq N}\sum_{\substack{\boldsymbol{r}\in\mathbb{Z}^d\setminus \{\boldsymbol{0}\}}}c_{\boldsymbol{r}}e(\boldsymbol{r}.(\boldsymbol{\alpha}(\boldsymbol{a}_m-\boldsymbol{a}_n)))\Bigg)^2d\boldsymbol{\alpha}.
		\end{align*}
		After squaring the integrand, we want to interchange the summations and integrations. Such rearrangement can be justified by using the fact that the partial sums of a Fourier series of an indicator function are uniformly bounded. Hence, the dominated convergence theorem is applicable ( for example, see,  \cite[Chapter 3, Exercise 18]{SS}).
		Thus, $\Var(R_{2,\infty}^{(d)}(s,.,N))$ equals
		\begin{align*}
			&\frac{1}{N^2}\displaystyle\sum_{\substack{1\leq m\neq n\leq N\\1\leq k\neq l\leq N}}\sum_{\substack{\boldsymbol{r},\boldsymbol{t}\in\mathbb{Z}^d\setminus \{\boldsymbol{0}\}}}c_{\boldsymbol{r}}c_{\boldsymbol{t}}\int_{[0,1)^d}e(\boldsymbol{r}.(\boldsymbol{\alpha}(\boldsymbol{a}_m-\boldsymbol{a}_n))-\boldsymbol{t}.(\boldsymbol{\alpha}(\boldsymbol{a}_k-\boldsymbol{a}_l)))d\boldsymbol{\alpha}\\
			=& \frac{1}{N^2}\displaystyle\sum_{\substack{1\leq m\neq n\leq N\\1\leq k\neq l\leq N}}\sum_{\substack{\boldsymbol{r},\boldsymbol{t}\in\mathbb{Z}^d\setminus \{\boldsymbol{0}\}}}c_{\boldsymbol{r}}c_{\boldsymbol{t}}\prod_{i\leq d}\int_{[0,1)}e\Big(\big(r_i(a_m^{(i)}-a_n^{(i)})-t_i(a_k^{(i)}-a_l^{(i)})\big)\alpha_i\Big)d\alpha_i.
		\end{align*}
		But, the innermost integral is equal to $1$ whenever $r_i=t_i=0 $ or  $r_i(a_m^{(i)}-a_n^{(i)})=t_i(a_k^{(i)}-a_l^{(i)})$, otherwise the integral is zero.
		Note that when all components of $\boldsymbol{r}, \boldsymbol{t}$ are nonzero, the associated sums will produce the main contribution. Otherwise, when some of them are zero, we save powers of $N$. 
		Precisely, the variance is equal to
		\begin{align}\label{eq6}
			\frac{1}{N^2}\displaystyle\sum_{\substack{j_1<\cdots<j_q \leq d\\1\leq q\leq d}}\sideset{}{'}\sum_{\substack{\boldsymbol{v}=(v_{j_1},\ldots,v_{j_q}),\\\boldsymbol{w}=(w_{j_1},\dots,w_{j_q})\in\mathbb{Z}^q}}\mathcal{R}_N(v_{j_1},\ldots,v_{j_q})\mathcal{R}_N(w_{j_1},\ldots,w_{j_q})\Big(\frac{2s}{N^{1/d}}\Big)^{2(d-q)}\sideset{}{'}\sum_{\substack{\boldsymbol{r},\boldsymbol{t}\in\mathbb{Z}^q\\ \boldsymbol{r}\boldsymbol{v}=\boldsymbol{t}\boldsymbol{w}}}c_{\boldsymbol{r}} c_{\boldsymbol{t}},
		\end{align}
		where $\mathcal{R}_N$ is as defined in \eqref{R_N}.
		
		The relation $r_iv_i=t_iw_i$ allows us to write $r_i, t_i$ as
		\begin{align}
			r_i=\frac{h_iw_i}{\gcd(v_i,w_i)} \text{ and } t_i=\frac{h_iv_i}{\gcd(v_i,w_i)},\nonumber
		\end{align}
		where $h_i$ is a nonzero integer. Now we separate the sum over $h_i$ into three sub-intervals of the real line given by
		\begin{align}
			&|h_i|\leq\frac{N^{1/d}\gcd(v_i,w_i)}{s\max(|v_i|,|w_i|)},\nonumber\\
			&\frac{N^{1/d}\gcd(v_i,w_i)}{s\max(|v_i|,|w_i|)}<|h_i|\leq\frac{N^{1/d}\gcd(v_i,w_i)}{s\min(|v_i|,|w_i|)},\nonumber\\
			&|h_i|>\frac{N^{1/d}\gcd(v_i,w_i)}{s\min(|v_i|,|w_i|)}.\nonumber
		\end{align}
		For any fixed $j_1<j_2<\cdots<j_q$, we follow the arguments in page 344 of ~\cite{hinrichs2019multi} and get
		\[\sideset{}{'}\sum_{\substack{\boldsymbol{r},\,\boldsymbol{t}\in\mathbb{Z}^q\\ \boldsymbol{r}\boldsymbol{v}=\boldsymbol{t}\boldsymbol{w}}}c_{\boldsymbol{r}} c_{\boldsymbol{t}}\ll\frac{s^q(\log N)^q}{N^{q/d}}\displaystyle\prod_{1\leq \beta\leq q}\frac{\gcd(v_{j_\beta},w_{j_\beta})}{\sqrt{v_{j_\beta}w_{j_\beta}}}.
		\]
		
		Replace the above bound in the right hand side of \eqref{eq6} to get
		\begin{align}
			\Var(R_2^{(d)}(s,.,N))&\ll\frac{s^d(\log N)^d}{N^3}\displaystyle\sideset{}{'}\sum_{\boldsymbol{v},\boldsymbol{w}\in\mathbb{Z}^d}\mathcal{R}_N(\boldsymbol{v})\mathcal{R}_N(\boldsymbol{w})\prod_{1\leq i\leq d}\frac{\gcd(v_i,w_i)}{\sqrt{v_iw_i}}\nonumber\\
			+&\sum_{\substack{j_1<\cdots<j_q\\1\leq q<d}}\frac{s^{2d-q}(\log N)^q}{N^{4-q/d}}\sideset{}{'}\sum_{\boldsymbol{v},\boldsymbol{w}\in\mathbb{Z}^q}\mathcal{R}_N(\boldsymbol{v})\mathcal{R}_N(\boldsymbol{w}))\prod_{1\leq \beta\leq q}\frac{\gcd(v_{j_\beta},w_{j_\beta})}{\sqrt{v_{j_\beta}w_{j_\beta}}}. \nonumber
		\end{align}
		
		Now applying Proposition~\ref{gen gcd lemma} to the above GCD sums, we get for any $\gamma>0$,
		\begin{align*}
			\Var(R_2^{(d)}(s,.,N)) &\ll \frac{s^d(\log N)^d}{N^3}E\left(A_N^{(1)};\ldots;A_N^{(d)}\right)\frac{\exp(C\sqrt{\log N\log\log N})}{(\log N+\O(1))^d}+\frac{1}{N^{1/d-\gamma}}\\
			&\ll  \frac{s^d}{N^3}E\left(A_N^{(1)};\ldots;A_N^{(d)}\right)\exp\Big(C\sqrt{\log N\log\log N}\Big).
		\end{align*}
		Thus, under the hypothesis of the theorem, it follows that the variance is $\O(N^{-\delta/2})$. This is the main part of the proof. The rest of the arguments follow a standard method of applying Chebyshev's inequality and Borel-cantelli lemma(see the proof of Theorem 1 in \cite{aistleitner2017additive} and also \cite{hinrichs2019multi, RZ1999}.)
	\end{proof}

	\section{Proof of theorems \ref{2-PPC thm1} and \ref{2-PPC thm2}.}
	\subsection{Properties of Bessel functions}
	Here we state some properties of the Bessel function, which are important tools in the proof of Theorem \ref{2-PPC thm1} and Theorem~\ref{2-PPC thm2}. For $\nu$ complex order with $\Re(\nu)>-1/2$ and $t\geq 0$ the Bessel function $J_\nu$ is defined by 
	\begin{align}\label{bessel1}
		J_\nu(t) = \frac{(t/2)^\nu}{\Gamma(\nu+1/2)\Gamma(1/2)}\int_{-1}^1e^{itx}(1-x^2)^{\nu}\frac{dx}{\sqrt{1-x^2}}.
	\end{align}
	This definition is also valid for $t\in\mathbb{C}.$ The function $J_\nu$ has many interesting properties. For our application, we mention a few of them below and these are essentially given in \cite[Appendix B]{loukasgrafakos}.
	For $\Re(\nu)>-1/2$ and $t\geq 1$, we have a nice approximation that gives an asymptotic formula as $t\rightarrow \infty,$
	\begin{align}\label{p1}
		J_\nu(t) = \sqrt{\frac{2}{\pi t}} \cos{\left(t-\frac{\pi \nu}{2}-\frac{\pi}{4}\right)} +\O_\nu(t^{-3/2}).
	\end{align}
	For $0< t\leq 1$ and $\Re(\nu)>-1/2$, we have the following upper bound
	\begin{align}\label{p2}
		J_\nu(t) \ll_\nu \exp\big({\max\{(\Re(\nu)+1/2)^{-2}, (\Re(\nu)+1/2)^{-1} \}|\Im(\nu)|^2}\big) t^{\Re(\nu)}.
	\end{align}
	Whenever $t>0$ and $\nu \in \mathbb{N}$, the Bessel function reduces to a simple form:
	\begin{align}\label{bessel2}
		J_\nu(t) = \frac{1}{2\pi}\int_0^{2\pi}\cos(t\sin{\theta}- \nu \theta) d\theta.
	\end{align}
	
	\begin{lemma}\label{bessel fn lemma}
		Let $N$, $\nu \geq 2$ be two integers and $s,r>0$ be real numbers. Then
		\begin{itemize}
			\item[(1)] $J_{\nu/2}(r)\ll_\nu 1$ if $r\leq 1,$\\
			\item[(2)] $J_{\nu/2}(r)\ll_\nu \frac{1}{\sqrt{r}}$ if $r > 1,$\\
			\item[(3)] $\frac{J_{\nu/2}( 2\pi s r N^{-{1}/{\nu}})}{r^{\nu/2}}\ll_\nu \frac{ s^{\nu/2}}{\sqrt{N}}$, for any $r>0,$\\
			\item[(4)] $|J_\mu(r)|\leq 1$, for any $r>0$, and $\mu\in\mathbb{N}.$
		\end{itemize}	
	\end{lemma}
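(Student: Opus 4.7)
The plan is to derive all four bounds from the two standard estimates \eqref{p1} and \eqref{p2} already recorded, together with the integral representation \eqref{bessel2}. There is no serious obstacle; the only care needed is the case split in part~(3).

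\textbf{Part (1).} Since $\nu\ge 2$, the order $\nu/2$ is real with $\Re(\nu/2)\ge 1>-1/2$ and $\Im(\nu/2)=0$. Applying \eqref{p2} to $J_{\nu/2}$ on $0<r\le 1$ gives
\[
J_{\nu/2}(r)\ll_\nu r^{\nu/2}\le 1,
\]
which is the desired bound (for $r=0$ and $\nu>0$ one has $J_{\nu/2}(0)=0$).

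\textbf{Part (2).} For $r>1$, the asymptotic \eqref{p1} yields
\[
J_{\nu/2}(r)=\sqrt{\tfrac{2}{\pi r}}\cos\!\bigl(r-\tfrac{\pi\nu}{4}-\tfrac{\pi}{4}\bigr)+O_\nu(r^{-3/2})\ll_\nu r^{-1/2}.
\]

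\textbf{Part (3).} Write $u:=2\pi sr N^{-1/\nu}$ and split into two ranges.
If $u\le 1$, then \eqref{p2} (sharper than~(1)) gives
\[
J_{\nu/2}(u)\ll_\nu u^{\nu/2}=(2\pi s)^{\nu/2}\,r^{\nu/2}\,N^{-1/2},
\]
so that $J_{\nu/2}(u)/r^{\nu/2}\ll_\nu s^{\nu/2}N^{-1/2}$.
If $u>1$, part~(2) gives $J_{\nu/2}(u)\ll_\nu u^{-1/2}$, hence
\[
\frac{J_{\nu/2}(u)}{r^{\nu/2}}\ll_\nu \frac{1}{r^{\nu/2}\sqrt{u}}=\frac{N^{1/(2\nu)}}{(2\pi s)^{1/2}\,r^{(\nu+1)/2}}.
\]
In this range $r>N^{1/\nu}/(2\pi s)$, so $r^{(\nu+1)/2}>N^{(\nu+1)/(2\nu)}/(2\pi s)^{(\nu+1)/2}$, and inserting this lower bound collapses the right-hand side to $\ll_\nu s^{\nu/2}/\sqrt{N}$, matching the previous case.

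\textbf{Part (4).} For $\mu\in\mathbb{N}$ the integral representation \eqref{bessel2} gives
\[
|J_\mu(r)|\le \frac{1}{2\pi}\int_0^{2\pi}|\cos(r\sin\theta-\mu\theta)|\,d\theta\le 1.
\]
This completes the sketch; the only mildly nontrivial step is the exponent bookkeeping in part~(3), which one verifies by checking that the two regimes meet cleanly at the threshold $u=1$.
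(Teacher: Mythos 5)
Your proof is correct. Parts (1), (2) and (4) follow exactly the paper's (one-line) argument: \eqref{p2} for the small-argument bound, \eqref{p1} for the decay, and the trivial bound on the integral in \eqref{bessel2}. The only place you diverge is part (3): the paper obtains it in one stroke from the integral representation \eqref{bessel1}, which gives $|J_{\nu/2}(t)|\ll_\nu t^{\nu/2}$ \emph{uniformly for all} $t>0$ (the integral $\int_{-1}^1(1-x^2)^{\nu/2-1/2}\,dx$ is an absolute $\nu$-dependent constant), so no case split is needed. You instead rely on \eqref{p2}, which is only stated for $0<t\le 1$, and therefore must patch the range $t>1$ with the decay estimate; your exponent bookkeeping there checks out, and the two regimes do meet at the threshold. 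The trade-off is purely cosmetic: the paper's route is shorter because it exploits that the power bound from \eqref{bessel1} is global, while yours is self-contained given only the two asymptotic estimates \eqref{p1}--\eqref{p2} and costs a short computation. Either argument suffices for the application in \eqref{2nd eq}.
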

	Properties $1,\,2,\,3$ and $4$ follows from \eqref{p2},\,\eqref{p1},\, \eqref{bessel1} and \eqref{bessel2}, respectively.
	\subsection{Preparation of the proofs of Theorem \ref{2-PPC thm1} and Theorem \ref{2-PPC thm2}}
	For the sequence $(\boldsymbol{x}_n)= (\{\boldsymbol{a}_n\boldsymbol{\alpha}\})$, the $d$-dimensional pair correlation statistic in $2$-norm is
	\begin{align}
		R_{2,2}^{(d)}(s,\boldsymbol{\alpha},N)=\frac{1}{N}\displaystyle\sum_{1\leq m\neq n\leq N}I_{s,N}(\boldsymbol{\alpha}(\boldsymbol{a}_m-\boldsymbol{a}_n)),\nonumber
	\end{align}
	where $I_{s,N}$ is the indicator function for all $\boldsymbol{x}\in\mathbb{R}^d$  which satisfy $\|\boldsymbol{x}\|_2\leq s/N^{1/d}$.
	Then, the Fourier series expansion of $I_{s,N}$ is given by
	\begin{align}
		I_{s,N}(\boldsymbol{\alpha})\sim\displaystyle\sum_{\substack{\boldsymbol{r}\in\mathbb{Z}^d\\ \boldsymbol{r}=(r_1,\cdots,r_d)}}c_{\boldsymbol{r}}e(\boldsymbol{r}.\boldsymbol{\alpha}),\nonumber
	\end{align}
	where
	\begin{align}\label{c_r}
		c_{\boldsymbol{r}}=c_{\boldsymbol{r},s}=\begin{cases}
			\omega_d\frac{s^d}{N}, & \mbox{ if } \boldsymbol{r}=\boldsymbol{0},\\
			\frac{s^{d/2}}{\sqrt{N}\|\boldsymbol{r}\|_2^{d/2}}J_{d/2}\big(\frac{2\pi s}{N^{1/d}}\|\boldsymbol{r}\|_2\big), & \mbox{ if } \boldsymbol{r}\in\mathbb{Z}^d\setminus \{\boldsymbol{0}\}.
		\end{cases}
	\end{align} 
	For the details of such formulas, one can see~\cite[Appendix B]{loukasgrafakos}. 
	
	It is not hard to verify that  
	\begin{align*}
		\mathbb{E}(R_{2,2}^{(d)}(s, . ,N))=\omega_ds^d\frac{N-1}{N}.
	\end{align*}
	We now proceed to calculate the variance of $R_{2,2}^{(d)}(s, . ,N)$, that is,
	\begin{align*}
		\Var(R_{2,2}^{(d)}(s,.,N)) &= \displaystyle\int_{[0,1)^d}\bigg(R_{2,2}^{(d)}(s,\boldsymbol{\alpha},N)-\frac{\omega_ds^d(N-1)}{N}\bigg)^2d\boldsymbol{\alpha}\\
		& = \frac{1}{N^2}\displaystyle\int_{[0,1)^d}\Bigg(\sum_{1\leq m\neq n\leq N}\sum_{\substack{\boldsymbol{r}\in\mathbb{Z}^d\setminus \{\boldsymbol{0}\}}}c_{\boldsymbol{r}}e(\boldsymbol{r}.(\boldsymbol{\alpha}(\boldsymbol{a}_m-\boldsymbol{a}_n)))\Bigg)^2d\boldsymbol{\alpha}.
	\end{align*}
	By interchanging summation and integration $\Var(R_{2,2}^{(d)}(s,.,N))$ is seen to be bounded above by
	\begin{align}\label{eqppc2}
		\frac{1}{N^2}\displaystyle\sum_{\substack{1\leq m\neq n\leq N\\1\leq k\neq l\leq N}}\sum_{\substack{\boldsymbol{r},\boldsymbol{t}\in\mathbb{Z}^d\setminus \{\boldsymbol{0}\} }}c_{\boldsymbol{r}}\overline{c_{\boldsymbol{t}}}\prod_{i\leq d}\int_{[0,1)}e\Big(\big(r_i(a_m^{(i)}-a_n^{(i)})-t_i(a_k^{(i)}-a_l^{(i)})\big)\alpha_i\Big)d\alpha_i.
	\end{align}
	Such rearrangement can be justified as follows. We split the Fourier series of the indicator function at some parameter $M$ and consider the partial sum up to $M$ and the tail part. We apply the Cauchy-Schwarz inequality to the tail part to interchange the sums and the integration, which leaves us with the square-integral of the tail. However, the $2$-norm of the tail goes to zero as $M\rightarrow \infty$. Thus, for arbitrarily large $M$ the partial sum up to $M$ remains, and the rearrangement is obvious.
	
	As in Theorem~\ref{infinity-PPC thm}, it is enough to show that the variance is arbitrarily small.
	\begin{proof}[Proof of Theorem~\ref{2-PPC thm1}]
		In this theorem, we consider $a_n^{(i)}=a_n$ for all $1\leq i\leq d,$ then \eqref{eqppc2} and the orthogonality of exponential function allow us to write
		\begin{align}	
			\Var(R_{2,2}^{(d)}(s,.,N))\leq&\frac{1}{N^2}\displaystyle\sum_{v,w\in\mathbb{Z}\setminus \{{0}\}}\mathcal{R}_N(v)\mathcal{R}_N(w)\sum_{\substack{\boldsymbol{r},\boldsymbol{t}\in\mathbb{Z}^d\setminus \{\boldsymbol{0}\}\\ vr_i=wt_i}}|c_{\boldsymbol{r}}c_{\boldsymbol{t}}|. \label{eq1ppc2}
		\end{align}
		Let us fix $j_1<j_2<\cdots<j_q\leq d$ where $1\leq q\leq d$. We claim that  
		\begin{align}\label{eq3ppc2}
			\sideset{}{''}\sum_{\substack{\boldsymbol{r},\boldsymbol{t}\in\mathbb{Z}^d\setminus \{\boldsymbol{0}\}\\ r_{j_\beta}v=r_{j_\beta}w}}|c_{\boldsymbol{r}}c_{\boldsymbol{t}}|\ll_d\frac{s^{2d-q}}{N^{2-q/d}}\frac{(v,w)^q}{\sqrt{|vw|^q}},
		\end{align}
		where the sum $\Sigma^{''}$ is over $\boldsymbol{r},\boldsymbol{t}$ with all components zero other than $r_{j_\beta}, t_{j_\beta}$ for $1\leq\beta\leq q.$ This implies that for $q<d$, we will always save a power of $N$ and the main contribution to the variance comes from the case $q=d$. 
		
		Now, from the relations $r_{j_\beta}v=r_{j_\beta}w$ we can write
		\[r_{j_\beta}=\frac{wh_{j_\beta}}{(v,w)}\text{ and }t_{j_\beta}=\frac{vh_{j_\beta}}{(v,w)}\text{ for }1\leq\beta\leq q,\]
		where $h_{j_\beta}$ takes integer values. Let us denote the $q$-tuple $(h_{j_1},\ldots,h_{j_q})$ by $\boldsymbol{h}$ and from now onwards, we use the notation $\|\boldsymbol{h}\|_2$  for the usual $2$-norm.
		
		Using the definition of $c_{\boldsymbol{r}}$'s from \eqref{c_r} the left hand side of \eqref{eq3ppc2} is bounded by
		\begin{align}\label{2nd eq}
			\ll \displaystyle\frac{s^d}{N}\sideset{}{'}\sum_{\boldsymbol{h}\in\mathbb{Z}^{q}}\frac{(v,w)^d}{(|vw|)^{d/2}}\frac{|J_{d/2}(\frac{2\pi s|w|}{N^{1/d}(v,w)}\|\boldsymbol{h}\|_2)|}{\|\boldsymbol{h}\|_2^{d/2}}\frac{|J_{d/2}(\frac{2\pi s|v|}{N^{1/d}(v,w)}\|\boldsymbol{h}\|_2)|}{\|\boldsymbol{h}\|_2^{d/2}}.
		\end{align}
		Now we divide this sum into three parts 
		$\Sigma^{'}=\Sigma_1^{'}+\Sigma_2^{'}+\Sigma_3^{'}$ depending on the following range of values of $\|\boldsymbol{h}\|_2:$
		\begin{align}
			1\leq&\|\boldsymbol{h}\|_2\leq\frac{N^{1/d}(v,w)}{2\pi s\max(|v|,|w|)}:=\max \boldsymbol{h},\nonumber \\
			\frac{N^{1/d}(v,w)}{2\pi s\max(|v|,|w|)}\leq&\|\boldsymbol{h}\|_2\leq\frac{N^{1/d}(v,w)}{2\pi s\min(|v|,|w|)}:=\min \boldsymbol{h},\nonumber\\
			&\|\boldsymbol{h}\|_2\geq\frac{N^{1/d}(v,w)}{2\pi s\min(|v|,|w|)}.\nonumber
		\end{align}
		We are using (3) of Lemma~\ref{bessel fn lemma} to get
		\begin{align*}
			\Sigma_1^{'} &\ll \frac{s^d}{N}(\max \boldsymbol{h})^q\ll\frac{s^{d-q}(v,w)^q}{N^{1-q/d}\sqrt{|vw|^q}}.
		\end{align*}
		By using (1) and (2) of Lemma~\ref{bessel fn lemma}, we obtain
		\begin{align}
			\Sigma_2^{'}&\ll\displaystyle\frac{(v,w)^d}{(|vw|)^{d/2}}\sum_{\max \boldsymbol{h}\leq\|\boldsymbol{h}\|_2\leq \min \boldsymbol{h}}\frac{\sqrt{\max \boldsymbol{h}}}{\|\boldsymbol{h}\|_2^{d+1/2}}\nonumber\\
			&\ll\frac{s^{d-q}(v,w)^q\max(|v|,|w|)^{d-q}}{N^{1-q/d}|vw|^{d/2}}\ll\frac{s^{d-q}(v,w)^q}{N^{1-q/d}\sqrt{|vw|^q}}.\nonumber
		\end{align}
		Again, using case (2) of Lemma~\ref{bessel fn lemma}, we deduce
		\begin{align*}
			\Sigma_3^{'}&\ll \frac{(v,w)^{d+1}N^{1/d}}{s(|vw|)^{d/2+1/2}}\displaystyle\sum_{\|\boldsymbol{h}\|_2> \min \boldsymbol{h}}\frac{1}{\|\boldsymbol{h}\|_2^{d+1}}\\
			&\ll \frac{(v,w)^{d+1}N^{1/d}}{s(|vw|)^{d/2+1/2}}\frac{1}{(\min \boldsymbol{h})^{d-q+1}} \ll \frac{s^{d-q}(v,w)^q}{N^{1-q/d}\sqrt{|vw|^q}}.
		\end{align*}
		Combining all three bounds with \eqref{2nd eq} we obtain the claim \eqref{eq3ppc2}. By inserting \eqref{eq3ppc2} into \eqref{eq1ppc2}, we get
		\begin{align*}
			\Var(R_{2,2}^{(d)}(s,.,N))\ll &\displaystyle\sum_{1\leq q\leq d}\frac{s^{2d-q}}{N^{4-q/d}}\sum_{v,w\in\mathbb{Z}\setminus \{0\}}\mathcal{R}_N(v)\mathcal{R}_N(w)\frac{(v,w)^q}{\sqrt{|vw|^q}}.\\
		\end{align*}
		Now, using the result on GCD sums by G\'al~\cite{gel}, we obtain
		\begin{align*}
			\Var(R_{2,2}^{(d)}(s,.,N)) \ll \begin{cases}
				\frac{s^2}{N^3}E(A_N)(\log\log N)^2+ \frac{s^2}{N^{1/2 -\gamma }},& \mbox{ if } d=2,\\
				\frac{s^d}{N^3}E(A_N) + \frac{s^d}{N^{{1}/{d}- \gamma}}, & \mbox{ if } d\geq 3,
			\end{cases}
		\end{align*}
		for some sufficiently small $\gamma>0.$ Thus, under the hypothesis of this theorem, the variance is $\O((\log N)^{-1-\epsilon/2})$. The rest of the arguments follow from the proof of ~\cite[Theorem 5]{Bloom2019GCDSA}.
	\end{proof}
	\begin{proof}[Proof of Theorem \ref{2-PPC thm2}]
		For $d=2$, (\ref{eqppc2}) gives us
		\begin{align}\label{eq4ppc2}
			\Var(R_{2,2}^{(2)}(s,.,N))&\leq \frac{1}{N^2}\sum_{\substack{v_1,v_2,w_1,w_2\in\mathbb{Z}\setminus \{0\}}}\mathcal{R}_N(v_1,v_2)\mathcal{R}_N(w_1,w_2)\sideset{}{'}\sum_{\substack{\boldsymbol{r}, \boldsymbol{t}\in\mathbb{Z}^2\\ r_iv_i=t_iw_i}}|c_{\boldsymbol{r}}c_{\boldsymbol{t}}| \\ \nonumber
			& + \frac{1}{N^2}\sum_{\substack{v_1, w_1\in\mathbb{Z}\setminus \{0\}}}\mathcal{R}_N(v_1)\mathcal{R}_N(w_1) \sum_{\substack{r_1,t_1 \in\mathbb{Z}\setminus \{0\}\\ r_1v_1=t_1w_1}}|c_{(r_1,0)}c_{(t_1,0)}|\\ \nonumber
			& + \frac{1}{N^2}\sum_{\substack{v_2, w_2\in\mathbb{Z}\setminus \{0\}}}\mathcal{R}_N(v_2)\mathcal{R}_N(w_2)  \sum_{\substack{r_2,t_2\in\mathbb{Z}\setminus \{0\}\\ r_2v_2=t_2w_2}}|c_{(0,r_2)}c_{(0,t_2)}|.
		\end{align} 
		Note that the second and third terms on the right hand side above are $\O(N^{-1/2+\gamma})$ for some $\gamma>0$, as it follows from \eqref{eq3ppc2} with $d=2$ and $q=1$. 
		
		Let us call the inner sum in the first term $I$.
		From the relation $r_iv_i=t_iw_i$ we have 
		$r_i=\frac{w_ih_i}{(v_i,w_i)}$ and $t_i=\frac{v_ih_i}{(v_i,w_i)}$ for $i=1,2$ where $h_i$'s are nonzero integers. For simplicity we write
		\[A=\frac{w_1}{(v_1,w_1)},\, B=\frac{w_2}{(v_2,w_2)},\, C=\frac{v_1}{(v_1,w_1)},\, D=\frac{v_2}{(v_2,w_2)}.\]
		By using \eqref{c_r} we obtain
		\begin{align*}
			I&\ll\frac{s^2}{N}\sum_{h_1,h_2\neq0}\frac{\Big|J_1\Big(\frac{2\pi s}{\sqrt{N}}(A^2h_1^2+B^2h_2^2)^{\frac{1}{2}}\Big)\Big|}{(A^2h_1^2+B^2h_2^2)^{\frac{1}{2}}}\frac{\Big|J_1\Big(\frac{2\pi s}{\sqrt{N}}(C^2h_1^2+D^2h_2^2)^{\frac{1}{2}}\Big)\Big|}{(C^2h_1^2+D^2h_2^2)^{\frac{1}{2}}}\nonumber\\
			= &\frac{s^2}{N|AC|}\sum_{h_1\neq 0}\frac{1}{h_1^2}\sum_{h_2\neq 0}\frac{\Big|J_1\Big(\frac{2\pi s|Ah_1|}{\sqrt{N}}(1+\frac{B^2}{A^2h_1^2}h_2^2)^{\frac{1}{2}}\Big) J_1\Big(\frac{2\pi s|Ch_1|}{\sqrt{N}}(1+\frac{D^2}{C^2h_1^2}h_2^2)^{\frac{1}{2}}\Big)\Big|}{\Big(1+\frac{B^2}{A^2h_1^2}h_2^2\Big)^{\frac{1}{2}} \Big(1+\frac{D^2}{C^2h_1^2}h_2^2\Big)^{\frac{1}{2}}}.
		\end{align*}
		Now we divide the sum over $h_1$ into two parts,
		\[|h_1|\leq\frac{\sqrt{N}}{2\pi s\min(|A|,|C|)}\text{ and }|h_1|>\frac{\sqrt{N}}{2\pi s\min(|A|,|C|)}.\]
		Applying (4) and (2) of Lemma \ref{bessel fn lemma}, respectively, for the small and large arguments of the Bessel function in the above inequality, we get 
		\begin{align*}
			I	&\ll \frac{s^2}{N|AC|}\Bigg(\sum_{|h_1|\leq\frac{\sqrt{N}}{2\pi s\min(|A|,|C|)}}\frac{1}{h_1^2}\sum_{h_2\neq 0}\Big(1+\frac{B^2}{A^2h_1^2}h_2^2\Big)^{-\frac{1}{2}}\Big(1+\frac{D^2}{C^2h_1^2}h_2^2\Big)^{-\frac{1}{2}}\\
			&+\sum_{|h_1|>\frac{\sqrt{N}}{2\pi s\min(|A|,|C|)}}\frac{\sqrt{N}}{s|AC|^{1/2}|h_1|^3}\sum_{h_2\neq 0}\Big(1+\frac{B^2}{A^2h_1^2}h_2^2\Big)^{-\frac{1}{2}-\frac{1}{4}}\Big(1+\frac{D^2}{C^2h_1^2}h_2^2\Big)^{-\frac{1}{2}-\frac{1}{4}}\Bigg).
		\end{align*}
		Now observe that for $a_1,a_2\in\mathbb{R}$, $(1+a_1^2)(1+a_2^2)\geq(1+a_1a_2)^2$. Then, the sum on $h_2$ in the above two terms can be bounded by
		\[\int_0^{\infty}\Big(1+\Big|\frac{BD}{AC}\Big|\frac{x^2}{h_1^2}\Big)^{-1}dx\,
		\text{ and } \,\int_0^{\infty}\Big(1+\Big|\frac{BD}{AC}\Big|\frac{x^2}{h_1^2}\Big)^{-3/2}dx,\]
		respectively, and both are bounded by
		$|h_1|\big|\frac{AC}{BD}\big|^{1/2}.$
		Thus, we have 
		\begin{align*}
			I & \ll \frac{s^2}{N|AC|}\Bigg(\Big|\frac{AC}{BD}\Big|^{1/2}\sum_{|h_1|\leq\frac{\sqrt{N}}{2\pi s\min(|A|,|C|)}}\frac{1}{|h_1|}
			+ \frac{\sqrt{N}}{s \sqrt{|BD|}}\sum_{|h_1|>\frac{\sqrt{N}}{2\pi s\min(|A|,|C|)}}\frac{1}{|h_1|^2}\Bigg)\\
			& \ll\frac{s^2}{N|AC|}\Bigg(\Big|\frac{AC}{BD}\Big|^{1/2}\log N +\frac{\min(|A|,|C|)}{|BD|^{1/2}}\Bigg)\ll \frac{s^2}{N}\frac{\log{N}}{|ABCD|^{1/2}}.
		\end{align*}
		
		Hence, we deduce the variance estimate 
		\begin{align}\label{estimate1}
			\Var(R_{2,2}^{(2)}(s,.,N))\ll \frac{s^2\log N}{N^3}\displaystyle\sum_{\substack{v_1,v_2,\\w_1,w_2\in\mathbb{Z}\setminus \{0\}}}\mathcal{R}_N(v_1,v_2)\mathcal{R}_N(w_1,w_2)\frac{(v_1,w_1)(v_2,w_2)}{\sqrt{|v_1w_1v_2w_2|}}.
		\end{align}
		Now applying Proposition~\ref{gen gcd lemma} in \eqref{estimate1}, we conclude that 
		\[\Var(R_{2,2}^{(2)}(s,.,N))\ll \frac{s^2}{N^3\log{N}}E(A_N^{(1)};A_N^{(2)})\exp(C\sqrt{\log N\log\log N}).\]
		This completes the proof.
	\end{proof}
	
	\section{Proof of Theorem~\ref{thm6}}
	This follows easily by combining Theorem \ref{infinity-PPC thm} with Corollary~\ref{coro2.3} and Theorem \ref{2-PPC thm2} with Corollary~\ref{coro2.3} .
	\section{Acknowledgement}  The authors would like to thank Prof. Christoph Aistleitner for helpful suggestions.


\begin{thebibliography}{99}
		
		
		
		\bibitem{aistleitner2017additive}
		C. Aistleitner, G. Larcher, M. Lewko, 
		\newblock Additive energy and the Hausdorff dimension of the exceptional set in metric pair correlation problems, with an appendix by J. Bourgain,
		\newblock{\em{Israel J. Math.}} 222 (2017), no. 1, 463-485.
		
		\bibitem{aistleitner2018pair}
		C. Aistleitner, T. Lachmann, F. Pausinger,
		\newblock Pair correlations and equidistribution,
		\newblock{\em{J. Number Theory}} 182 (2018), 206-220.
		
		\bibitem{Bloom2019GCDSA}
		T. F. Bloom, A. Walker,
		\newblock GCD sums and sum-product estimates,
		\newblock {\em{Israel J. Math.}} 235 (2019), 1-11.
		
		\bibitem{BZ} F. Boca, A. Zaharescu, 
		\newblock On the pair correlation for fractional parts of vector sequences,
		\newblock{\em{Arch. Math.}} 77 (2001), 498-507.		
		
		\bibitem{BS}
		A. Bondarenko, K. Seip,
		\newblock Large greatest common divisor sums and extreme values of the Riemann zeta function,
		\newblock{\emph{Duke Math. J.}} 166 (2017), 1685-1701.
		
		\bibitem{BT}
		R. de la Bret\`eche, G. Tenenbaum, 
		\newblock Sommes de G\'al et applications, 
		\newblock{\em{Proc. Lond. Math. Soc.}} 119 (2019), 104-134.
		
		\bibitem{FW}
		K. Ford, T. D. Wooley,
		\newblock On Vinogradov's mean value theorem: strongly diagonal behaviour via efficient congruencing,
		\newblock{\em{Acta Math.}} 213 (2014),no. 2, 199-236. 
		
		\bibitem{gel}
		I. S. G\'al, \newblock A theorem concerning diophantine approximations,
		\newblock{\em{Nieuw Arch. Wiskd.}} 2 (23) (1949), 13-38.
		
		\bibitem{Garaev2004}
		M. Z. Garaev,
		\newblock Upper bounds for the number of solutions of a diophantine equation,
		\newblock{\em{Trans. Amer. Math. Soc.}} 357 (2005), 2527-2534
		
		\bibitem{Garaev2001}
		M. Z. Garaev, Ka-Lam Kueh,
		\newblock $L_1$-norms of exponential sums and the corresponding additive problem,
		\newblock {\em{Z. Anal. Anwendungen}}, 20(2001), No. 4, pp. 999-1006.
		
		\bibitem{loukasgrafakos}
		L. Grafakos, 
		\newblock{\em{Classical Fourier Analysis,}}
		\newblock Third edition, Graduate Texts in Mathematics, 249. Springer, New York, 2014.
		
		\bibitem{Grepstad2017Larcher}
		S. Grepstad, G. Larcher,
		\newblock On pair correlation and discrepancy,
		\newblock{\em{ Arch. Math.}} 109 (2017), 143-149. 
		
		
		\bibitem{hauke2021weak}
		M. Hauke, A. Zafeiropoulos, 
		\newblock Weak Poissonian correlations,
		\newblock preprint, 2021, arXiv:2112.11813.
		
		
		\bibitem{hinrichs2019multi}
		A. Hinrichs, L. Kaltenb{\"o}ck, G. Larcher, W. Stockinger, M. Ullrich, 
		\newblock On a multi-dimensional Poissonian pair correlation concept and uniform distribution,
		\newblock{\em{Monatsh. Math.}} 190 (2019), no. 2, 333-352.
		
		\bibitem{haltonnotPPC2021}
		R. Hofer, L. Kaltenböck,  
		\newblock Pair correlations of Halton and Niederreiter sequences are not Poissonian,
		\newblock Monatsh Math 194, 789–809 (2021).
		
		\bibitem{K}
		A. Klenke,
		\newblock {\em{Probability Theory: A Comprehensive Course,}}
		\newblock Springer Publishing, Cham, 2020.
		
		\bibitem{larcher2020pair}
		G. Larcher, W. Stockinger,
		\newblock On pair correlation of sequences, in Discrepancy theory, Radon Ser. Comput. Appl. Math., 26, De Gruyter, Berlin, 2020, pp. 133-145.
		
		\bibitem{LS}
		G. Larcher, W. Stockinger,
		\newblock Pair correlation of sequences $(\{a_n \alpha \})_{n\in N}$ with maximal additive energy,
		\newblock {\em{Math. Proc. Cambridge Philos. Soc.}} 168 (2020), no. 2, 287--293.
		
		
		\bibitem{lewko2017refinements}
		M. Lewko, M. Radziwi{\l}{\l},
		\newblock Refinements of G{\'a}l's theorem and applications,
		\newblock {\em{Adv. in Math.}} 305 (2017), 280-297.
		
		\bibitem{marklof2020pair}
		J. Marklof, 
		\newblock Pair correlation and equidistribution on manifolds,
		\newblock{\em{Monatsh. Math.}} 191 (2020), 2, 279-294.
		
		\bibitem{rudnick1998pair}
		Z. Rudnick, P. Sarnak, 
		\newblock The pair correlation function of fractional parts of polynomials,
		\newblock{\em{Comm. Math. Phys.}} 194 (1998), no. 1, 61-70.
		
		\bibitem{RSZ2001}
		Z. Rudnick, P. Sarnak, A. Zaharescu,
		\newblock The distribution of spacings between the fractional parts of
		$n^2\alpha$,
		\newblock{\em{Invent. Math.}} 145 (2001), no. 1, 37--57.
		
		\bibitem{RZ1999}
		Z. Rudnick, A. Zaharescu,
		\newblock A metric result on the pair correlation of fractional parts of sequences,
		\newblock{\em{Acta Arith.}} 89 (1999), 283--293.
		
		\bibitem{SS}
		E. M. Stein, R. Shakarchi,
		\newblock {\em{Fourier Analysis. An Introduction,}}
		\newblock  Princeton Lectures in Analysis, 1. Princeton University Press, Princeton, NJ, 2003. xvi+311 pp.
		
		\bibitem{steinerberger2020poissonian}
		S. Steinerberger,
		\newblock Poissonian pair correlation in higher dimensions,
		\newblock{\em{J. Number Theory}} 208 (2020), 47--58.
		
		
		
	\end{thebibliography}
\end{document}